%
%
%
%
%
\RequirePackage{fix-cm}
\documentclass[smallextended]{svjour3}       
\smartqed  
\usepackage{}
\usepackage{graphicx}
\usepackage{multirow,bigstrut}
\usepackage{amsmath,amsfonts,amssymb}
\usepackage{mathrsfs}
\usepackage{color}
\usepackage{indentfirst}
\usepackage{algorithm,algpseudocode}
\usepackage{latexsym}
\usepackage{graphicx}
\usepackage{array,epsf,epsfig}
\usepackage{mathdots}
\usepackage{cite}
\usepackage{multirow}
\usepackage{epstopdf}
\usepackage{mathtools}
\usepackage{tensor}
\usepackage{textcomp}
\usepackage{booktabs}

%
%
%
%
%

\newcommand{\mO}{{\mathcal{O}}}

\newcommand{\tT}{\intercal}

\newcommand{\beq}{\begin{equation}}
\newcommand{\eeq}{\end{equation}}
\newcommand{\bey}{\begin{eqnarray}}
\newcommand{\eey}{\end{eqnarray}}

\textheight=8.0in \topmargin=0in \textwidth=6.4in
\oddsidemargin=0.28in \baselineskip=30pt
\begin{document}

\title{Exponential-sum-approximation technique for variable-order time-fractional diffusion equations \thanks{{\bf{Funding:}} This work is supported in part by research grants of the Science and Technology Development Fund, Macau SAR (file no. 0118/2018/A3), and MYRG2018-00015-FST from University of Macau.}}


\author{Jia-Li Zhang \and Zhi-Wei Fang \and Hai-Wei Sun
}


\institute{Jia-Li Zhang \at
              Department of Mathematics, University of Macau,
Macao\\
\email{zhangjl2628@gmail.com}\\
           Zhi-Wei Fang \at
              Department of Mathematics, University of Macau,
Macao\\
\email{fzw913@yeah.net}\\
           Hai-Wei Sun(Corresponding author) \at
              Department of Mathematics, University of Macau,
Macao\\
              \email{HSun@um.edu.mo}
}

\date{Received: date / Accepted: date}

\maketitle

\begin{abstract}
In this paper, we study the variable-order (VO) time-fractional diffusion equations. For a VO function $\alpha(t)\in(0,1)$, we develop an exponential-sum-approximation (ESA) technique to approach the VO Caputo fractional derivative. The ESA technique keeps both the quadrature exponents and the number of exponentials in the summation unchanged at different time level. Approximating parameters are properly selected to achieve the efficient accuracy. Compared with the general direct method, the proposed method reduces the storage requirement from $\mO(n)$ to $\mO(\log^2 n)$ and the computational cost from $\mO(n^2)$ to $\mathcal{O}(n\log^2 n)$, respectively, with $n$ being the number of the time levels. When this fast algorithm is exploited to construct a fast ESA scheme for the VO time-fractional diffusion equations, the computational complexity of the proposed scheme is only of $\mO(mn\log^2 n)$ with $\mO(m\log^2n)$ storage requirement, where $m$ denotes the number of spatial grids. Theoretically, the unconditional stability and error analysis of the fast ESA scheme are given. The effectiveness of the proposed algorithm is verified by numerical examples.
\keywords {variable-order Caputo fractional derivative \and exponential-sum-approximation method \and fast algorithm \and time-fractional diffusion equation \and stability and convergence}
\subclass{ 33F05 \and 65M06 \and 65M12}
\end{abstract}

\section{Introduction}
In this paper, we consider the numerical discretization of the \emph{variable-order}(VO) Caputo time-fractional diffusion equations
\begin{align}
&\prescript{C}{0}{\mathcal{D}}^{\alpha(t)}_tu(x,t)=\frac{\partial^2u(x,t)}{\partial x^2}+f(x,t),\ \ x\in (0,x_R),\ \ t\in (0,T], \label{VOTFDE1}\\
&u(x,0)=\varphi(x),\ \ x\in [0,x_R],\label{VOTFDE2}\\
&u(0,t)=u(x_R,t)=0,\ \ t\in (0,T],\label{VOTFDE3}
\end{align}
where $f(x,t)$ and $\varphi(x)$ are all smooth functions. The VO Caputo fractional derivative is defined by
\cite{Coimbra-2003}
\begin{align}\label{caputo-def}
\prescript{C}{0}{\mathcal{D}}^{\alpha(t)}_tu(t)\equiv\frac 1 {\Gamma(1-\alpha(t))}\int_0^t\frac{u'(\tau)}{(t-\tau)^{\alpha(t)}}d\tau,
\end{align}
where $\alpha(t)\in[\underline{\alpha},\overline{\alpha}]\subset(0,1)$ is the VO function and $\Gamma(\cdot)$ is the Gamma function.

In the last few decades, the fractional calculus played a more and more important role due to its broad applications in both mathematics and physical science. For example, it has been proved that the fractional calculus can better characterize certain complex phenomena in fields such as the biology, the ecology, the diffusion, and the control system \cite{Mainardi-2000, Raberto-2002, Benson-2000, Liu-2004, Podlubny-1999, Kilbas-2006}. Furthermore, it has been revealed that many important dynamical problems exhibit the order of the fractional operator varying with time, space, or some other variables; see \cite{Lorenzo-2002, Sun-2011, Sun-2019}. Therefore, the VO fractional operators become more and more useful in studying their memory properties associated with time or spatial location. Recently, the related applications of the VO fractional derivatives in the field of science and engineering have been reported; see, for details,
\cite{ Zhuang-2000, Pedro-2008, Coimbra-2003, Diazand-2009, Jia-2017, Sokolov-2005, Ingman-2004, Obembe-2017, Sun-2009, C}.

An interesting extension of the classical fractional calculus was proposed in 1993 by Samko and Ross \cite{Samko-1993} where they generalized the Riemann-Liouville and Marchaud fractional operators with the  VO sense. In 2002, the concept of the VO fractional derivative was firstly introduced by Lorenzo and Hartley \cite{Lorenzo-2002}. According to their definition, the order of the fractional derivative is allowed to vary as a function of independent variables such as time and space. Afterwards, various VO fractional differential operators with specific meanings were defined. Coimbra \cite{Coimbra-2003} gave a novel definition for the VO fractional differential operator by taking the Laplace-transform of the Caputo's definition of the fractional derivative. Soon, Coimbra, and Kobayashi \cite{Soon-2005} showed that Coimbra's definition was better suited for modeling physical problems due to its meaningful physical interpretations; see also \cite{Ramirez-2010}. 
Recently, several studies towards the approximation to the VO Caputo fractional derivative and the corresponding numerical analysis have been carried out. Shen et al. \cite{Shen-2012} employed a new numerical method to approximate the VO Caputo fractional derivative in the VO time-fractional diffusion equations. The convergence, stability, and solvability of the scheme were explored by the Fourier analysis. Zhao, Sun, and Karniadakis \cite{Zhao-2015} derived two second-order approximations to the VO Caputo fractional derivative and provided the error analysis. Du, Alikhanov, and Sun \cite{Du-2020} found a special point on each time interval to present a numerical differential formula for the VO Caputo fractional derivative, yielding at least second-order accuracy of the approximation. Furthermore, such an approximation was applied to devise two schemes for the multi-dimensional sub-diffusion equations with the VO time-fractional derivative. The stability and convergence of both schemes were also investigated via the energy method.

Due to the nonlocality of the fractional operators, however, the existing numerical methods for approximating the VO fractional derivative at each time level require all values in previous time levels. Therefore, in practical calculations, they are too expensive in storage and complexity. Even the constant-order (CO) fractional operators also suffer from such difficulty. Indeed, using the $L1$ approximation formula \cite{Oldham-1974, Sun-2006, Langlands-2005, Sun-2005, Liao-2018} to discretize the CO Caputo fractional derivative, it usually needs $\mathcal{O}(n)$ storage and $\mathcal{O}(n^2)$ computational cost, where $n$ is the total number of the time levels. To overcome this difficulty, many efforts have been made to speed up the evaluation of the CO Caputo fractional derivative. In fact there have been two ways to develop fast algorithms for the CO fractional derivatives and time-fractional diffusion equations. The first way is based on the fact that the coefficient matrices of the discretized CO fractional operators possess the lower triangular Toeplitz or Toeplitz-like structure. In \cite{Lu-2015, Lu-2018}, the approximate inversion method was proposed to solve the CO time-fractional diffusion equations with $\mathcal{O}(mn\log n)$ computational complexity, where $m$ denotes the number of spatial grid points. A block divide-and-conquer method \cite{Ke-2015} was presented to successfully solve the large-scale linear system arising in the CO time-fractional diffusion equations with $\mathcal{O}(mn\log^2 n)$ computational workload. Some other researchers further developed the Toeplitz-like matrix splitting based on all-at-once preconditioned fast method to solve space-time fractional diffusion equations \cite{Fu-2017, Bertaccini-2019}. Nevertheless, the coefficient matrix of the numerical schemes for the VO time-fractional diffusion equations will be no longer owning the Toeplitz-like structure. Therefore, those fast methods cannot be straightforwardly applied to the VO cases. The other way is based on the feature that the CO Caputo fractional derivative is the convolution operator. By making use of this property, a class of fast algorithms have been proposed via splitting the weak singular kernel in the convolution; see \cite{Lubich-2002, Schadle-2006, Lopez-2008}. As developments,
Baffet and Hesthaven\cite{Baffet-2016} compressed the kernel in the Laplace domain and obtained a sum-of-poles approximation for the Laplace transform of the kernel. Also using the Laplace transform technique, Jiang et al. \cite{Jiang-2017} adopted the Gauss-Jacobi quadrature and Gauss-Legendre quadrature to achieve a sum-of-exponentials for approximating the kernel function in the CO Caputo fractional derivative. The resulting algorithms by those methods require $\mathcal{O}(\log^2 n)$ storage and $\mathcal{O}(n\log^2 n)$ computational complexity. As we have mentioned above, however, the VO Caputo fractional derivative is not the convolution operator. Therefore, those efficient approximations for the CO Caputo fractional derivative cannot be directly applied to the VO Caputo fractional derivative.

In this paper, based on a linear combination of exponentials \cite{Beylkin-2017}, we develop a fast method for  approximating the VO Caputo fractional derivative (\ref{caputo-def}). Since the historical information of the integral requires much active memory and expensive computational operations, the main objective of the proposed fast method is to efficiently discretize the integral on the interval $[0, t-\Delta t]$ in a cheaper way, where $\Delta t$ is the time step. Unlike applying the sum-of-exponentials method \cite{Jiang-2017}, our strategy is to approximate the singular kernel in \eqref{caputo-def} directly \emph{without using the Laplace transform.} More precisely, the singular kernel in \eqref{caputo-def} is approached by the exponential-sum-approximation (ESA) method with the expected accuracy $\epsilon$, {\it where the weights of the sum are dependent on $t$, the quadrature exponents and the total number of the exponentials are independent of $t$}. Accordingly, a fast algorithm is devised to approach the VO Caputo fractional derivative and the computational complexity of the proposed algorithm is of $\mathcal{O}(n\log^2 n)$ with $\mO(\log^2n)$ active memory, which are much cheaper than $\mathcal{O}(n^2)$ cost and $\mathcal{O}(n)$ storage by the $L1$ approximation formula. Moreover, the local truncated error of the proposed approximation to the VO Caputo fractional derivative is estimated with $\mO(\epsilon + \Delta t^{2-\overline{\alpha}})$ accuracy. Based on this fast method, we construct a fast finite difference scheme (the fast ESA scheme) to solve the VO time-fractional diffusion equations. Meanwhile, the maximum principle is exploited to verify that the fast ESA scheme is unconditionally stable and convergent with the order of $\mathcal{O}(\epsilon+\Delta t^{2-\overline{\alpha}}+\Delta x^2)$, where $\Delta x$ is the spatial step. We remark that once the expected accuracy $\epsilon\leq \mO(\Delta t^{2-\overline{\alpha}})$ is reached, the proposed scheme can achieve the convergence order of $\mathcal{O}(\Delta t^{2-\overline{\alpha}}+\Delta x^2)$. Compared with the $L1$ scheme, the fast ESA scheme has the same accuracy and convergence order with less CPU time and memory.

The paper is organized as follows. In Section \ref{fast-approximation}, we propose a fast algorithm for approaching the VO Caputo fractional derivative utilizing an ESA technique and the local truncated error is studied. In Section \ref{finite-difference-scheme}, the fast method is applied to construct a fast ESA scheme to solve the VO time-fractional diffusion equations. The stability and convergence of the scheme are also investigated by the maximum principle. In Section \ref{numerical-results}, numerical results are reported to demonstrate the efficiency of the proposed method. Concluding remarks are given in Section \ref{concluding-remarks}.

\section{Fast approximation to VO Caputo fractional derivative}\label{fast-approximation}
In this section, we propose an ESA method to fast approximate the VO Caputo fractional derivative such that the memory and computational complexity are significantly reduced compared with the $L1$ approximation formula \cite{Shen-2012}.

Consider the VO Caputo fractional derivative defined by (\ref{caputo-def}) with $t\in(0,T]$ $(T\geq1)$. For a positive integer $n$, let $\Delta t=T/n$ and $t_k=k\Delta t$ for $k=0,1,\ldots,n$. At each time level $t_k$, we have
\begin{align}\label{caputo}
\prescript{C}{0}{\mathcal{D}}^{\alpha(t)}_tu(t)|_{t=t_k}&=\frac 1 {\Gamma(1-\alpha(t_k))}\int_{0}^{t_{k}}\frac{u'(\tau)}{(t_k-\tau)^{\alpha(t_k)}}d\tau.
\end{align}
For convenience, denote $\alpha_k=\alpha(t_k)$. To discretize the VO Caputo fractional derivative, we introduce the first-order approximation to $u'(\tau)$ over the interval $[t_{k-1},t_k]$ with $1\leq k\leq n$:
\begin{align*}
\Pi_1^ku'(\tau)=\frac{u(t_k)-u(t_{k-1})}{\Delta t},
\end{align*}
and the piecewise approximation function
\begin{align*}
\Pi_1 u'(\tau)=\left\{\Pi_1^ku'(\tau)|\tau\in[t_{k-1},t_k], k=1,2,\ldots,n\right\}.
\end{align*}
Based on the above interpolation polynomials, the $L1$ approximation formula to (\ref{caputo}) is obtained as\cite{Shen-2012}
\begin{align}\label{L1}
\prescript{L1}{0}{\mathcal{D}}^{\alpha_k}_tu(t_k)=&\frac 1 {\Gamma(1-\alpha_k)}\int_{0}^{t_k}\frac{\Pi_1u'(\tau)}{(t_k-\tau)^{\alpha_k}}d\tau\\
=&\frac {\Delta t^{-\alpha_k}}{\Gamma(2-\alpha_k)}\left[a_{k-1}^ku(t_k)-\sum\limits_{l=1}^{k-1}(a_l^k-a_{l-1}^k)u(t_l)-a_0^ku(t_0)\right],\nonumber
\end{align}
where $a_l^k=(k-l)^{1-\alpha_k}-(k-l-1)^{1-\alpha_k}$. The local truncated error of the $L1$ approximation formula for the CO Caputo fractional derivative with the order $0<\gamma<1$ is estimated by the following lemma.
\begin{lemma}\label{L-truncation-L1-CO}(see \cite{Gao-2011})
Suppose $u(t)\in C^2[0,t_k]$. Let the CO Caputo fractional derivative at $t_k$ be defined by $\prescript{C}{0}{\mathcal{D}}^{\gamma}_tu(t)|_{t=t_k}$ and the $L1$ approximation formula be defined by $\prescript{L1}{0}{\mathcal{D}}^{\gamma}_tu(t_k)$. Then, we have
\begin{align}\label{truncation-L1-CO}
\left|\prescript{C}{0}{\mathcal{D}}^{\gamma}_tu(t)|_{t=t_k}-\prescript{L1}{0}{\mathcal{D}}^{\gamma}_tu(t_k)\right|=\mathcal{O}(\Delta t^{2-\gamma}).
\end{align}
\end{lemma}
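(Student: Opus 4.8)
The plan is to write the truncation error as the integral of the interpolation error of $u'$ against the weakly singular kernel, peel off the last subinterval where the kernel is singular, and estimate the two pieces separately. First I would set
\begin{align*}
R^k := \prescript{C}{0}{\mathcal{D}}^{\gamma}_tu(t)|_{t=t_k}-\prescript{L1}{0}{\mathcal{D}}^{\gamma}_tu(t_k),
\end{align*}
and, using the definitions (\ref{caputo}) and (\ref{L1}) together with the fact that $\Pi_1 u'$ restricted to $[t_{l-1},t_l]$ equals $\Pi_1^l u'$, express
\begin{align*}
\Gamma(1-\gamma)\,R^k=\sum_{l=1}^{k}\int_{t_{l-1}}^{t_l}\frac{u'(\tau)-\Pi_1^l u'(\tau)}{(t_k-\tau)^{\gamma}}\,d\tau=:\sum_{l=1}^{k-1}E_l^k+E_k^k.
\end{align*}
The structural observation I would rely on is that $\Pi_1^l u'(\tau)=\frac{1}{\Delta t}\int_{t_{l-1}}^{t_l}u'(s)\,ds$ is exactly the average of $u'$ over $[t_{l-1},t_l]$; hence $\int_{t_{l-1}}^{t_l}\big(u'(\tau)-\Pi_1^l u'(\tau)\big)\,d\tau=0$, and since $u\in C^2[0,t_k]$ the mean-value theorem gives $|u'(\tau)-\Pi_1^l u'(\tau)|\le \Delta t\,M_2$ with $M_2:=\max_{t\in[0,t_k]}|u''(t)|$.

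For the regular terms $1\le l\le k-1$ I would exploit the zero-mean property to subtract a constant from the kernel for free: $E_l^k=\int_{t_{l-1}}^{t_l}\big(u'(\tau)-\Pi_1^l u'(\tau)\big)\big[(t_k-\tau)^{-\gamma}-(t_k-t_{l-1})^{-\gamma}\big]\,d\tau$. Since $g(\tau)=(t_k-\tau)^{-\gamma}$ is increasing with $g'(\tau)=\gamma(t_k-\tau)^{-\gamma-1}$, the mean-value theorem yields $|(t_k-\tau)^{-\gamma}-(t_k-t_{l-1})^{-\gamma}|\le \gamma\,\Delta t\,(t_k-t_l)^{-\gamma-1}$ for $\tau\in[t_{l-1},t_l]$. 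Combining the two pointwise bounds and integrating over an interval of length $\Delta t$,
\begin{align*}
|E_l^k|\le \gamma M_2\,\Delta t^{3}(t_k-t_l)^{-\gamma-1}=\gamma M_2\,\Delta t^{2-\gamma}(k-l)^{-\gamma-1},
\end{align*}
so that $\sum_{l=1}^{k-1}|E_l^k|\le \gamma M_2\,\Delta t^{2-\gamma}\sum_{j=1}^{k-1}j^{-\gamma-1}\le \gamma M_2\,\zeta(\gamma+1)\,\Delta t^{2-\gamma}=\mathcal{O}(\Delta t^{2-\gamma})$ uniformly in $k$, the series converging because $\gamma+1>1$.

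For the singular term $l=k$ the zero-mean trick is unnecessary; here $(t_k-\tau)^{-\gamma}$ is still integrable since $1-\gamma>0$, so I bound directly
\begin{align*}
|E_k^k|\le \Delta t\,M_2\int_{t_{k-1}}^{t_k}(t_k-\tau)^{-\gamma}\,d\tau=\frac{M_2}{1-\gamma}\,\Delta t^{2-\gamma}.
\end{align*}
Adding the two estimates and dividing by $\Gamma(1-\gamma)$ gives $|R^k|\le C(\gamma)\,M_2\,\Delta t^{2-\gamma}$, which is (\ref{truncation-L1-CO}). The two care points I expect to be the crux are: (i) on the last subinterval one must be content with the crude bound $|u'-\Pi_1^k u'|\le \Delta t\,M_2$ and let the integrable singularity supply the remaining $\Delta t^{1-\gamma}$ — this term dominates and pins down the sharp exponent $2-\gamma$; and (ii) away from $\tau=t_k$ one must extract the extra factor $\Delta t$ from the smoothness of the kernel (via the subtracted constant / mean-value step) so that the tail $\sum_j j^{-\gamma-1}$ is summable rather than accumulating into an $\mathcal{O}(k\,\Delta t^{2-\gamma})$ loss. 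An equivalent, slightly sharper route is to integrate $E_l^k$ by parts once and place the remaining derivative on $u''$ directly, but the bookkeeping above already delivers the stated order.
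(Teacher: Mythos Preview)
Your argument is correct. The paper itself does not prove this lemma; it is quoted with the citation ``(see \cite{Gao-2011})'' and no proof is supplied in the text, so there is nothing line-by-line to compare against here.

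Your route---splitting $R^k$ into subinterval contributions, exploiting the zero-mean property of $u'-\Pi_1^l u'$ to subtract the constant $(t_k-t_{l-1})^{-\gamma}$ from the kernel on the nonsingular blocks, and then summing the resulting $(k-l)^{-\gamma-1}$ tail---is one of the standard derivations of the $L1$ truncation estimate and matches in spirit the arguments in \cite{Gao-2011} and \cite{Sun-2006}. The referenced proofs typically integrate by parts once to put the error directly on $u''$ before estimating, which yields a slightly cleaner constant; your mean-value version is equivalent in order and arguably more transparent about why the extra factor of $\Delta t$ appears away from the singularity. Both approaches identify the last subinterval $l=k$ as the term fixing the sharp exponent $2-\gamma$, exactly as you note in point (i).
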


According to Lemma \ref{L-truncation-L1-CO}, by letting $\gamma=\alpha_k$, we have the following corollary to show the local truncated error of the $L1$ approximation formula for the VO Caputo fractional derivative.
\begin{corollary}\label{L-truncation-L1}
Suppose $u(t)\in C^2[0,t_k]$. Let the VO Caputo fractional derivative at $t_k$ be as in (\ref{caputo}) and the $L1$ approximation formula be as in (\ref{L1}). Then, we have
\begin{align}\label{truncation-L1}
\left|\prescript{C}{0}{\mathcal{D}}^{\alpha(t)} _tu(t)|_{t=t_k}-\prescript{L1}{0}{\mathcal{D}}^{\alpha_k}_tu(t_k)\right|=\mathcal{O}(\Delta t^{2-\alpha_k}).
\end{align}
\end{corollary}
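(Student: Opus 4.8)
The plan is to reduce the variable‑order statement directly to the constant‑order estimate of Lemma~\ref{L-truncation-L1-CO}. The crucial observation is that, at a \emph{fixed} time level $t_k$, the variable‑order Caputo derivative in \eqref{caputo} depends on the function $\alpha(t)$ only through its single value $\alpha_k=\alpha(t_k)$ at that point: both the integrand $u'(\tau)/(t_k-\tau)^{\alpha_k}$ and the prefactor $1/\Gamma(1-\alpha_k)$ involve no other value of $\alpha$. Hence
\[
\prescript{C}{0}{\mathcal{D}}^{\alpha(t)}_tu(t)\big|_{t=t_k}=\prescript{C}{0}{\mathcal{D}}^{\gamma}_tu(t)\big|_{t=t_k}\ \text{ with }\ \gamma=\alpha_k,
\]
that is, it coincides with the constant‑order Caputo derivative of order $\gamma=\alpha_k$ evaluated at $t_k$. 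Likewise, inspecting \eqref{L1}, the $L1$ approximation $\prescript{L1}{0}{\mathcal{D}}^{\alpha_k}_tu(t_k)$ is assembled entirely from the single number $\alpha_k$ — the weights $a_l^k=(k-l)^{1-\alpha_k}-(k-l-1)^{1-\alpha_k}$ and the factor $\Delta t^{-\alpha_k}/\Gamma(2-\alpha_k)$ use only $\alpha_k$ — so it is exactly the constant‑order $L1$ formula of order $\gamma=\alpha_k$ applied on the grid $t_0,t_1,\dots,t_k$.

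With these two identifications the argument is immediate: since $u\in C^2[0,t_k]$ is precisely the hypothesis of Lemma~\ref{L-truncation-L1-CO}, applying that lemma with $\gamma=\alpha_k\in[\underline{\alpha},\overline{\alpha}]\subset(0,1)$ yields
\[
\left|\prescript{C}{0}{\mathcal{D}}^{\alpha(t)}_tu(t)\big|_{t=t_k}-\prescript{L1}{0}{\mathcal{D}}^{\alpha_k}_tu(t_k)\right|
=\left|\prescript{C}{0}{\mathcal{D}}^{\gamma}_tu(t)\big|_{t=t_k}-\prescript{L1}{0}{\mathcal{D}}^{\gamma}_tu(t_k)\right|
=\mathcal{O}(\Delta t^{2-\gamma})=\mathcal{O}(\Delta t^{2-\alpha_k}),
\]
which is exactly \eqref{truncation-L1}.

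The only delicate point — the one I would flag as the main (if minor) obstacle — is the uniformity of the implied constant in $k$, since that is what is actually needed later when the local errors are summed in the convergence analysis of Section~\ref{finite-difference-scheme}. In Lemma~\ref{L-truncation-L1-CO} the hidden constant a priori depends on $\gamma$; tracing the standard derivation of the $L1$ truncation bound, it is controlled by $\max_{t\in[0,t_k]}|u''(t)|$ multiplied by a factor of the form $C/\Gamma(2-\gamma)$ together with a bounded numerical factor coming from the per‑subinterval Taylor remainder. Because $\gamma=\alpha_k$ ranges only over the compact interval $[\underline{\alpha},\overline{\alpha}]\subset(0,1)$, on which $1/\Gamma(2-\gamma)$ is continuous and hence bounded, this constant may be chosen independently of $k$ (and of $n$). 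I would therefore append a short remark making this uniformity explicit, so that \eqref{truncation-L1} can be invoked safely in the stability and convergence proofs that follow.
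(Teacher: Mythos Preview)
Your proposal is correct and is exactly the approach the paper takes: it does not give a separate proof of the corollary but simply states that it follows from Lemma~\ref{L-truncation-L1-CO} ``by letting $\gamma=\alpha_k$,'' which is precisely your reduction. Your added remark on the uniformity of the implied constant over $\alpha_k\in[\underline{\alpha},\overline{\alpha}]$ is a worthwhile clarification that the paper leaves implicit.
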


Utilizing the $L1$ approximation formula to calculate the value at the current time level, it needs to compute the summation of a series including the values of all previous time levels. Therefore, the $L1$ approximation formula requires $\mathcal{O}(n)$ storage and $\mathcal{O}(n^2)$ computational complexity. It inspires us to construct a fast algorithm to approach the $L1$ approximation formula (\ref{L1}). Our idea is to approximate the singular kernel in \eqref{caputo-def} directly to obtain an ESA method. We introduce the following lemma, which is helpful for developing the fast algorithm.

\begin{lemma}\label{L-soe}(see \cite{Beylkin-2017})
For any constant $\gamma>0$, $0<\delta\leq t\leq1$, and $0<\epsilon\leq\ 1/e$, there exist a constant $h$, integers $\overline{N}$ and $\underline{N}$, which satisfy
\begin{align}\label{parameter}
h&\leq \frac {2\pi}{\log3+\gamma\log(\cos 1)^{-1}+\log\epsilon^{-1}},\nonumber\\
\underline{N}&\geq \frac 1{h}\left(\frac 1\gamma\log\epsilon+\frac 1\gamma\log\Gamma(1+\gamma)\right),\\
\overline{N}&\leq \frac 1{h}\left(\log\frac 1\delta+\log\log\frac 1\epsilon+\log\gamma+\frac 12\right),\nonumber
\end{align}
such that
\begin{eqnarray}\label{soe}
\left|t^{-\gamma}-\sum_{i=\underline{N}+1}^{\overline{N}}\theta_{\gamma,i} e^{-\lambda_it}\right|\leq t^{-\gamma}\epsilon,
\end{eqnarray}
where $\theta_{\gamma,i}=\frac{he^{\gamma ih}}{\Gamma(\gamma)}$ and $\lambda_i=e^{ih}$.
Furthermore, the total number of terms in the summation depends on the fixed power $\gamma$, the expected accuracy $\epsilon$, and the parameter $\delta$, which can be estimated as
\begin{align*}
N_\epsilon=\overline{N}-\underline{N}\leq \frac 1 {10}\left(2\log\frac 1 \epsilon+\log\gamma+2\right)\left(\log \frac 1 \delta+\frac 1 \gamma\log\frac 1 \epsilon+\log\log\frac 1 \epsilon+\frac 3 2\right).
\end{align*}
\end{lemma}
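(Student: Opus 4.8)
\emph{Proof plan.} The plan is to realise $t^{-\gamma}$ as an integral that the sum in \eqref{soe} is the trapezoidal discretisation of, and then bound three error pieces. I would start from the Gamma-function identity $t^{-\gamma}=\frac{1}{\Gamma(\gamma)}\int_{0}^{\infty}s^{\gamma-1}e^{-st}\,ds$ and apply the substitution $s=e^{x}$ to obtain $t^{-\gamma}=\frac{1}{\Gamma(\gamma)}\int_{-\infty}^{\infty}e^{\gamma x}e^{-te^{x}}\,dx$, whose integrand $g(x)=e^{\gamma x-te^{x}}$ is entire, decays like $e^{\gamma x}$ as $x\to-\infty$, and decays doubly exponentially as $x\to+\infty$. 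Applying the infinite-lattice trapezoidal rule with uniform spacing $h$ to this integral yields exactly $\sum_{i\in\IZ}\frac{he^{\gamma ih}}{\Gamma(\gamma)}e^{-te^{ih}}=\sum_{i\in\IZ}\theta_{\gamma,i}e^{-\lambda_{i}t}$ with $\theta_{\gamma,i}$ and $\lambda_i$ as in the statement, so the approximant in \eqref{soe} is precisely this lattice sum truncated to $i\in\{\underline N+1,\ldots,\overline N\}$. Hence it suffices to bound, by a fixed fraction of $t^{-\gamma}\epsilon$ each, the trapezoidal \emph{quadrature error} of the full lattice sum and the two \emph{truncation tails} $\sum_{i>\overline N}\theta_{\gamma,i}e^{-\lambda_i t}$ and $\sum_{i\le\underline N}\theta_{\gamma,i}e^{-\lambda_i t}$.

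For the quadrature error I would invoke the classical contour-shift (Poisson-summation) estimate for the trapezoidal rule on an analytic integrand. Shifting to the line $\mathrm{Im}\,x=\pm1$ gives $|g(x\pm i)|=e^{\gamma\,\mathrm{Re}\,x}e^{-te^{\mathrm{Re}\,x}\cos1}$, so $\int_{\IR}|g(x\pm i)|\,dx=\Gamma(\gamma)(\cos1)^{-\gamma}t^{-\gamma}$; estimating the Fourier coefficients $\widehat{g}(2\pi k/h)$ by pushing their defining integrals across this strip then bounds the quadrature error by $2(\cos1)^{-\gamma}t^{-\gamma}\,e^{-2\pi/h}/(1-e^{-2\pi/h})$, and the prescribed ceiling $h\le2\pi/(\log3+\gamma\log(\cos1)^{-1}+\log\epsilon^{-1})$ is exactly what makes this $\le\tfrac13 t^{-\gamma}\epsilon$, the constant $\log3$ absorbing the factor $2$ and the $1/(1-e^{-2\pi/h})$. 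For the right tail, using $t\ge\delta$ gives $\theta_{\gamma,i}e^{-\lambda_i t}\le\frac{h}{\Gamma(\gamma)}e^{\gamma ih-\delta e^{ih}}$, and since the exponent $\gamma ih-\delta e^{ih}$ is maximised at $e^{ih}=\gamma/\delta$ and strictly decreasing beyond it, the condition $\overline N h\le\log\delta^{-1}+\log\log\epsilon^{-1}+\log\gamma+\tfrac12$ places $\overline N$ just past the peak, so $\sum_{i>\overline N}$ is controlled by its first term times a convergent doubly-exponential series and stays $\le\tfrac13 t^{-\gamma}\epsilon$. For the left tail, $e^{-\lambda_i t}\le1$ gives $\sum_{i\le\underline N}\theta_{\gamma,i}e^{-\lambda_i t}\le\frac{h}{\Gamma(\gamma)}\sum_{i\le\underline N}e^{\gamma ih}=\frac{h}{\Gamma(\gamma)}\cdot\frac{e^{\gamma\underline N h}}{1-e^{-\gamma h}}$, which is comparable to $e^{\gamma\underline N h}/\Gamma(1+\gamma)$; since $t\le1$ forces $t^{-\gamma}\ge1$, the bound $\underline N\ge\frac1h(\frac1\gamma\log\epsilon+\frac1\gamma\log\Gamma(1+\gamma))$ is the least-negative index for which this remains $\le\tfrac13 t^{-\gamma}\epsilon$. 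Summing the three contributions yields \eqref{soe}.

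Finally, the term count $N_\epsilon=\overline N-\underline N$ follows by subtracting the admissible lower bound for $\underline N$ from the admissible upper bound for $\overline N$ and inserting the choice $1/h=\frac1{2\pi}(\log3+\gamma\log(\cos1)^{-1}+\log\epsilon^{-1})$; the elementary estimates $\log\epsilon^{-1}\ge1$ (from $\epsilon\le1/e$) and the boundedness of $\gamma$ then reduce the factor $\frac1{2\pi}(\log3+\gamma\log(\cos1)^{-1}+\log\epsilon^{-1})$ to the product displayed in the statement with the constant $\tfrac1{10}$. I expect the quadrature-error step to be the crux: one must justify the contour shift for $g$ and for all its partial sums, keep the estimate uniform over $t\in[\delta,1]$, and track how the tilting factor $(\cos1)^{-\gamma}$ interacts with the prefactor $1/\Gamma(\gamma)$ so that the explicit constants $\log3$ and $\gamma\log(\cos1)^{-1}$ land precisely as stated; by contrast the two tail estimates are routine once the maximiser $e^{ih}=\gamma/\delta$ of the right-tail exponent and the geometric structure of the left tail are identified.
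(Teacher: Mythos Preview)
The paper does not supply its own proof of this lemma; it is quoted verbatim with attribution to \cite{Beylkin-2017} (Beylkin--Monz\'on) and used as a black box. Your plan is exactly the argument carried out in that reference: the representation $t^{-\gamma}=\Gamma(\gamma)^{-1}\int_{\IR}e^{\gamma x-te^{x}}\,dx$, recognition of the sum in \eqref{soe} as the truncated trapezoidal rule for this integral, the strip-analyticity/Poisson-summation bound with half-width $a=1$ (which is what produces the factors $(\cos1)^{-\gamma}$ and $e^{-2\pi/h}$), a geometric estimate for the left tail, and a double-exponential estimate for the right tail past the maximiser $e^{ih}=\gamma/\delta$. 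So there is nothing to compare against in the present paper, and your outline matches the cited source.

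One bookkeeping caution: with $e^{2\pi/h}=3(\cos1)^{-\gamma}\epsilon^{-1}$ the quadrature contribution is roughly $\tfrac{2}{3}t^{-\gamma}\epsilon$, not $\tfrac{1}{3}t^{-\gamma}\epsilon$, so the three pieces are not literally equal thirds; the constants in \cite{Beylkin-2017} are apportioned a bit differently, and the inequalities on $\underline N,\overline N$ in the lemma should be read as an existence statement (``parameters this mild already suffice'') rather than as sufficient conditions for arbitrary choices in those ranges. These are minor adjustments to your write-up, not structural issues.
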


\subsection{Fast algorithm to approximate VO Caputo fractional derivative}\label{fast-algorithm}
To develop the fast algorithm, we first split the integral in (\ref{L1}) into two parts as follows
\begin{align}\label{HL-parts}
\prescript{L1}{0}{\mathcal{D}}^{\alpha_k}_tu(t_k)&=\frac 1 {\Gamma(1-\alpha_k)}\left[\int_{0}^{t_{k-1}}\frac{\Pi_1u'(\tau)}{(t_k-\tau)^{\alpha_k}}d\tau+\int_{t_{k-1}}^{t_k}\frac{\Pi_1u'(\tau)}{(t_k-\tau)^{\alpha_k}}d\tau\right]\nonumber\\
&\equiv \frac 1 {\Gamma(1-\alpha_k)}\left[\mathcal{I}_{\Delta t}^{0,k-1}(t_k)+\mathcal{I}_{\Delta t}^{k-1,k}(t_k)\right].
\end{align}
Since the second part $\mathcal{I}_{\Delta t}^{k-1,k}(t_k)$ contributes few memory and computational cost compared with the first part $\mathcal{I}_{\Delta t}^{0,k-1}(t_k)$, we calculate $\mathcal{I}_{\Delta t}^{k-1,k}(t_k)$ directly. 
Thus, the task becomes to approximate the integral on the interval $[0,t_{k-1}]$ in (\ref{HL-parts}) efficiently and accurately.
For $k=2,3,\ldots,n$, we have
\begin{align}\label{his}
\mathcal{I}_{\Delta t}^{0,k-1}(t_k)
=&T^{-\alpha_k}\int_{0}^{t_{k-1}}\Pi_1u'(\tau)\left(\frac{t_k-\tau}T\right)^{-\alpha_k}d\tau.
\end{align}
Note that $\alpha_k>0$ and $0<\frac{\Delta t}T\leq \frac{t_k-\tau}T\leq1$ for $\tau\in[0,t_{k-1}]$. With the help of Lemma \ref{L-soe}, the kernel $\left(\frac{t_k-\tau}T\right)^{-\alpha_k}$ in (\ref{his}) can be approached by an ESA technique; i.e.,
\begin{align}\label{his-app}
\mathcal{I}_{\Delta t}^{0,k-1}(t_k)\approx&T^{-\alpha_k}\sum_{i=\underline{N}+1}^{\overline{N}}\theta_{k,i}\int_0^{t_{k-1}}\Pi_1u'(\tau){e^{-\lambda_i(t_k-\tau)/T}}d\tau,
\end{align}
where the quadrature weights and exponents are defined as in (\ref{soe})
\begin{align*}
\theta_{k,i}=\frac{he^{\alpha_kih}}{\Gamma(\alpha_k)},\ \ \lambda_i=e^{ih},
\end{align*}
in which $h$, $\underline{N}$ and $\overline{N}$ are properly chosen based on (\ref{parameter}).

\begin{remark}
The VO Caputo fractional derivative implies different value $\alpha_k$ at different time level, hence using the sum-of-exponentials for the CO sense proposed in \cite{Jiang-2017} cannot be implemented since that requires different quadrature points and numbers of exponentials to save the history information at different time levels.
\end{remark}

To keep the quadrature exponents and the number of the exponentials in the summation unchanged at different time, there are some considerations for the choices of the expected accuracy $\epsilon$ and the parameters $h$, $\underline{N}$ and $\overline{N}$.

\begin{remark}\label{R-parameter-choose}
We suggest taking the expected accuracy $\epsilon\leq\mO(\Delta t^{2-\overline{\alpha}})$ (will be discussed in Section \ref{finite-difference-scheme}). Once the expected accuracy $\epsilon$ is given, according to Lemma \ref{L-soe}, the parameters $h$, $\underline{N}$ and $\overline{N}$ will be chosen to satisfy (\ref{parameter}) at each time level. In the numerical simulations performed in this work, for each time level, we always choose $\epsilon=\Delta t^2$ and
\begin{align}\label{parameter-choose}
h&=\frac {2\pi}{\log3+\overline{\alpha}\log(\cos 1)^{-1}+\log\epsilon^{-1}},\nonumber\\
\underline{N}&=\left\lceil\frac 1{h\underline{\alpha}}\left[\log\epsilon+\log\Gamma(1+\overline{\alpha})\right]\right\rceil,\\
\overline{N}&=\left\lfloor\frac 1{h}\left(\log \frac T{\Delta t}+\log\log\epsilon^{-1}+\log\underline{\alpha}+2^{-1}\right)\right\rfloor.\nonumber
\end{align}
\end{remark}

We now obtain the approximation for $\mathcal{I}_{\Delta t}^{0,k-1}(t_k)$. The related discretization formula will be obtained (see Lemma \ref{L-truncation-L1-fast} later) as
\begin{align}\label{his-L1-fast}
\mathcal{I}_{\Delta t}^{0,k-1}(t_k)=\mathcal{I}_{\Delta t,\epsilon}^{0,k-1}(t_k)+\mathcal{O}(\epsilon),
\end{align}
where
\begin{align}\label{his-fast}
\mathcal{I}_{\Delta t,\epsilon}^{0,k-1}(t_k)=T^{-\alpha_k}\sum_{i=\underline{N}+1}^{\overline{N}}\theta_{k,i}v_{k,i},
\end{align}
in which
\begin{align}\label{his-y}
v_{k,i}=\int_0^{t_{k-1}}\Pi_1u'(\tau)e^{-\lambda_i(t_k-\tau)/T}d\tau.
\end{align}

Next, we discuss how to implement the fast algorithm to approximate the VO Caputo fractional derivative. We note that $v_{1,i}=0$ and $v_{k,i}$ $(k=2,3,\ldots,n)$ can be exactly calculated by the following recursive formula:
\begin{align}\label{his-y-compute}
v_{k,i}=&e^{-\lambda_i\Delta t/T} v_{k-1,i}+\int_{t_{k-2}}^{t_{k-1}}\Pi_1^{k-1}u'(\tau)e^{-\lambda_i(t_k-\tau)/T}d\tau\nonumber\\
=&e^{-\lambda_i\Delta t/T} v_{k-1,i}+T\frac{e^{-\lambda_i\Delta t /T}-e^{-2\lambda_i\Delta t/T}}{\lambda_i\Delta t}\left[u(t_{k-1})-u(t_{k-2})\right].
\end{align}
Since $v_{k-1,i}$ is known at the current time level $t_k$, we compute $v_{k,i}$ directly.

Based on (\ref{his-L1-fast}), $\mathcal{I}_{\Delta t}^{0,k-1}(t_k)$ in (\ref{HL-parts}) can be replaced by (\ref{his-fast}) to approximate $\tensor*[^{L1}_0]{\mathcal{D}}{}^{\alpha_k}_tu(t_{k})$. Finally, we obtain the fast approximation formula for the VO Caputo fractional derivative as
\begin{align}\label{fast2}
\prescript{F}{0}{\mathcal{D}}^{\alpha_k}_tu(t_k)\equiv&\frac 1{\Gamma(1-\alpha_k)}\left[\mathcal{I}_{\Delta t,\epsilon}^{0,k-1}(t_k)+\mathcal{I}_{\Delta t}^{k-1,k}(t_k)\right]\nonumber\\
=&\frac {T^{-\alpha_k}}{\Gamma(1-\alpha_k)}\sum_{i=\underline{N}+1}^{\overline{N}}\theta_{k,i}v_{k,i}+\frac{u(t_{k})-u(t_{k-1})}{\Delta t^{\alpha_k}\Gamma(2-\alpha_k)},\ \ k=2,3,\ldots,n.
\end{align}
In addition, for $k=1$, the first part in (\ref{HL-parts}) satisfies $\mathcal{I}_{\Delta t}^{0,0}(t_1)=0$, thus
\begin{align}\label{fast1}
\prescript{F}{0}{\mathcal{D}}^{\alpha_1}_tu(t_1)&\equiv\frac 1 {\Gamma(1-\alpha_k)}\mathcal{I}_{\Delta t}^{0,1}(t_1)=\frac{u(t_1)-u(t_0)}{\Delta t^{\alpha_1}\Gamma(2-\alpha_1)}=\tensor*[^{L1}_0]{\mathcal{D}}{}^{\alpha_1}_tu(t_1).
\end{align}

Summarizing all this activity, we give the following algorithm to show the detailed instruction for the implementation of the fast algorithm for approximating the VO Caputo fractional derivative.

\begin{algorithm}[H]
\caption{The fast algorithm to approximate VO Caputo fractional derivative gradually}
\label{alg-fast}
\begin{algorithmic}[1]
\State Give the time step $\Delta t$, the expected accuracy $\epsilon$ and set $h,\underline{N},\overline{N}$ as in (\ref{parameter-choose})
\State Compute $\prescript{F}{0}{\mathcal{D}}_t^{\alpha_1}u(t_1)$ by (\ref{fast1})
\State Set $\left\{\lambda_i=e^{ih}\right\}_{i=\underline{N}+1}^{\overline{N}}$ and $\{v_{1,i}=0\}_{i=\underline{N}+1}^{\overline{N}}$
\For{$k=2,3,\ldots,n$}
\State Set $\left\{\theta_{k,i}=\frac{he^{\alpha_kih}}{\Gamma(\alpha_k)}\right\}_{i=\underline{N}+1}^{\overline{N}}$ and update $\{v_{k,i}\}_{i=\underline{N}+1}^{\overline{N}}$ by (\ref{his-y-compute}) 
\State Compute $\prescript{F}{0}{\mathcal{D}}_t^{\alpha_k}u(t_k)$ by (\ref{fast2}) using $\{\lambda_i,\theta_{k,i},v_{k,i}\}_{i=\underline{N}+1}^{\overline{N}}$
\EndFor
\end{algorithmic}
\end{algorithm}

\begin{remark}\label{R-memory}
The difficulty to discretize the VO Caputo fractional derivative is that the calculation of the value on the current time level needs to store the values of all previous time levels and compute the integrals on every sub-interval. As a result, the $L1$ approximation formula requires $\mathcal{O}(n)$ memory and $\mathcal{O}(n^2)$ computational cost, respectively. Significantly, the proposed fast algorithm can reduce the storage and computational complexity. Indeed, Lemma \ref{L-soe} shows that at each time level, the total number of exponentials in the ESA method is independent of time and can be bounded by
\begin{align}\label{memory}
N_\epsilon=\overline{N}-\underline{N}\leq \frac 1 {10}\left(2\log\frac 1 \epsilon+\log\overline{\alpha}+2\right)\left(\log \frac T {\Delta t}+\frac 1 {\underline{\alpha}} \log\frac 1 \epsilon+\log\log\frac 1 \epsilon+\frac 3 2\right),
\end{align}
which is of $\mathcal{O}(\log^2n)$ with the expected accuracy $\epsilon\leq\mO(\Delta t^{2-\overline{\alpha}})$, when $n$ is sufficiently large. Meanwhile, at each time level, it only needs $\mathcal{O}(1)$ computational cost to compute $v_{k,i}$ since $v_{k-1,i}$ is known at that point. In total the fast algorithm requires only $\mathcal{O}(\log^2n)$ memory and $\mathcal{O}(n\log^2n)$ computational complexity when numerically discretize the VO Caputo fractional derivative. The proposed fast method provides an efficient tool to approximate the VO Caputo fractional derivative, which can be applied to simulate the VO time-fractional diffusion equations (\ref{VOTFDE1})--(\ref{VOTFDE3}).
\end{remark}

\subsection{Local truncated error of the fast approximation}\label{truncation}
To investigate the local truncated error of the fast approximation formula (\ref{fast2})--(\ref{fast1}) to the VO Caputo fractional derivative $\prescript{C}{0}{\mathcal{D}}^{\alpha(t_k)}_tu(t)|_{t=t_k}$ , we give the following lemma to state the error bound of the fast approximation formula $\prescript{F}{0}{\mathcal{D}}^{\alpha_k}_tu(t_k)$ to the $L1$ approximation formula $\prescript{L1}{0}{\mathcal{D}}^{\alpha_k}_tu(t_k)$.
\begin{lemma}\label{L-truncation-L1-fast}
Suppose $u(t)\in C^2[0,t_k]$. Let the $L1$ approximation formula be as in (\ref{L1}), the fast approximation formula be defined by (\ref{fast2})--(\ref{fast1}) and $\epsilon$ be the expected accuracy. Then, we have
\begin{align}\label{truncation-L1-fast}
\left|\prescript{L1}{0}{\mathcal{D}}^{\alpha_k}_tu(t_k)-\prescript{F}{0}{\mathcal{D}}^{\alpha_k}_tu(t_k)\right|=\mathcal{O}(\epsilon).
\end{align}
\end{lemma}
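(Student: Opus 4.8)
The plan is to track where the two formulas $\prescript{L1}{0}{\mathcal{D}}^{\alpha_k}_tu(t_k)$ and $\prescript{F}{0}{\mathcal{D}}^{\alpha_k}_tu(t_k)$ differ. By the decomposition \eqref{HL-parts} and the definition \eqref{fast2}, the two formulas share the local term $\mathcal{I}_{\Delta t}^{k-1,k}(t_k)$ exactly, and the case $k=1$ is trivial because $\prescript{F}{0}{\mathcal{D}}^{\alpha_1}_tu(t_1)=\prescript{L1}{0}{\mathcal{D}}^{\alpha_1}_tu(t_1)$ by \eqref{fast1}. Hence for $k\geq 2$ the entire discrepancy is
\begin{align*}
\left|\prescript{L1}{0}{\mathcal{D}}^{\alpha_k}_tu(t_k)-\prescript{F}{0}{\mathcal{D}}^{\alpha_k}_tu(t_k)\right|
=\frac{1}{\Gamma(1-\alpha_k)}\left|\mathcal{I}_{\Delta t}^{0,k-1}(t_k)-\mathcal{I}_{\Delta t,\epsilon}^{0,k-1}(t_k)\right|,
\end{align*}
so it suffices to prove \eqref{his-L1-fast}, i.e. that replacing the kernel $\left((t_k-\tau)/T\right)^{-\alpha_k}$ by its exponential sum on $[0,t_{k-1}]$ costs only $\mathcal{O}(\epsilon)$. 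Since $\Gamma(1-\alpha_k)$ is bounded away from $0$ and $\infty$ for $\alpha_k\in[\underline{\alpha},\overline{\alpha}]\subset(0,1)$, the prefactor is harmless.

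Next I would insert the pointwise bound from Lemma \ref{L-soe}. Writing $s=(t_k-\tau)/T\in[\Delta t/T,1]$, apply \eqref{soe} with $\gamma=\alpha_k$ and $\delta=\Delta t/T$ to get $\bigl|s^{-\alpha_k}-\sum_{i=\underline{N}+1}^{\overline{N}}\theta_{k,i}e^{-\lambda_i s}\bigr|\leq s^{-\alpha_k}\epsilon$. Substituting into \eqref{his} and \eqref{his-app} and using \eqref{his-y}, \eqref{his-fast},
\begin{align*}
\left|\mathcal{I}_{\Delta t}^{0,k-1}(t_k)-\mathcal{I}_{\Delta t,\epsilon}^{0,k-1}(t_k)\right|
\leq T^{-\alpha_k}\int_0^{t_{k-1}}\bigl|\Pi_1 u'(\tau)\bigr|\left(\frac{t_k-\tau}{T}\right)^{-\alpha_k}\epsilon\, d\tau
= \epsilon\int_0^{t_{k-1}}\frac{\bigl|\Pi_1 u'(\tau)\bigr|}{(t_k-\tau)^{\alpha_k}}\,d\tau.
\end{align*}
It remains to bound the last integral independently of $k$ and $\Delta t$. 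Because $u\in C^2[0,t_k]$, the piecewise-constant slopes $\Pi_1^j u'(\tau)=(u(t_j)-u(t_{j-1}))/\Delta t$ satisfy $|\Pi_1 u'(\tau)|\leq \|u'\|_{\infty,[0,t_k]}=:M$ uniformly (mean value theorem on each subinterval). Therefore the integral is at most $M\int_0^{t_{k-1}}(t_k-\tau)^{-\alpha_k}\,d\tau = M\,\frac{t_k^{1-\alpha_k}-\Delta t^{\,1-\alpha_k}}{1-\alpha_k}\leq \frac{M\,T^{1-\underline{\alpha}}}{1-\overline{\alpha}}$, a constant depending only on $u$, $T$, $\underline{\alpha}$, $\overline{\alpha}$. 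Combining the three displays yields $\bigl|\prescript{L1}{0}{\mathcal{D}}^{\alpha_k}_tu(t_k)-\prescript{F}{0}{\mathcal{D}}^{\alpha_k}_tu(t_k)\bigr|\leq C\epsilon$, which is \eqref{truncation-L1-fast}.

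The one subtlety I would be careful about — and what I expect to be the main (admittedly modest) obstacle — is the uniformity of all constants over $k$ and over the VO exponent $\alpha_k$. In particular, the condition of Lemma \ref{L-soe} requires $0<\delta\leq t\leq 1$; here $t$ plays the role of $(t_k-\tau)/T$, which lies in $[\Delta t/T,1]$ precisely because $T\geq 1$ and $\tau\leq t_{k-1}$, so $\delta=\Delta t/T$ works simultaneously for every $k$, and the parameters $h,\underline N,\overline N$ in \eqref{parameter-choose} are chosen once (using $\overline\alpha,\underline\alpha$) to satisfy \eqref{parameter} for all $\alpha_k\in[\underline\alpha,\overline\alpha]$; I would spell out that this common choice indeed validates \eqref{soe} at every time level. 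Apart from this bookkeeping, the argument is a direct estimate and no further machinery is needed; the error from the discrete derivative approximation $\Pi_1 u'$ is not part of this lemma (it is absorbed into Corollary \ref{L-truncation-L1}) and does not appear here.
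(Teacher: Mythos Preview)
Your proof is correct and follows essentially the same route as the paper: reduce to the history term via \eqref{HL-parts}--\eqref{fast2}, apply the pointwise ESA bound \eqref{soe} with $\gamma=\alpha_k$ and $\delta=\Delta t/T$, and then control $\int_0^{t_{k-1}}|\Pi_1 u'(\tau)|(t_k-\tau)^{-\alpha_k}\,d\tau$ using $|\Pi_1 u'|\le\|u'\|_\infty$ and direct integration of the kernel. Your version is in fact slightly cleaner than the paper's in that you place the absolute value inside the integral before estimating, and your explicit attention to the uniform (in $k$ and $\alpha_k$) validity of the parameter choice \eqref{parameter-choose} is a worthwhile refinement.
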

\begin{proof}
Clearly, (\ref{fast1}) implies the lemma is valid for $k=1$. For $k=2,3,\ldots,n$, obviously, the only difference between $\prescript{F}{0}{\mathcal{D}}^{\alpha_k}_tu(t_k)$  and $\prescript{L1}{0}{\mathcal{D}}^{\alpha_k}_tu(t_k)$ is the calculation of the first part $\mathcal{I}_{\Delta t}^{0,k-1}(t_k)$ in (\ref{HL-parts}); i.e.,
\begin{align}\label{truncation-L1-fast-proof}
\left|\prescript{L1}{0}{\mathcal{D}}^{\alpha_k}_tu(t_k)-\prescript{F}{0}{\mathcal{D}}^{\alpha_k}_tu(t_k)\right|=\frac 1 {\Gamma(1-\alpha_k)}\left|\mathcal{I}_{\Delta t}^{0,k-1}(t_k)-\mathcal{I}_{\Delta t,\epsilon}^{0,k-1}(t_k)\right|.
\end{align}
According to (\ref{his})--(\ref{his-app}), the error between $\mathcal{I}_{\Delta t}^{0,k-1}(t_k)$ and $\mathcal{I}_{\Delta t,\epsilon}^{0,k-1}(t_k)$ is obtained as
\begin{align*}
\left|\mathcal{I}_{\Delta t}^{0,k-1}(t_k)-\mathcal{I}_{\Delta t,\epsilon}^{0,k-1}(t_k)\right|
=&T^{-\alpha_k}\left|\int_{0}^{t_{k-1}}\Pi_1u'(\tau)\left[\left(\frac{t_k-\tau}T\right)^{-\alpha_k}-\sum\limits_{i=\underline{N}+1}^{\overline{N}}\theta_{k,i}e^{-\lambda_i(t_k-\tau)/T}\right]d\tau\right|.
\end{align*}
By Lemma \ref{L-soe}, it follows that
\begin{align*}
(1-\epsilon)\left(\frac{t_k-\tau}T\right)^{-\alpha_k}\leq \sum_{i=\underline{N}+1}^{\overline{N}}\theta_{k,i}e^{-\lambda_i(t_k-\tau)/T} \leq(1+\epsilon)\left(\frac{t_k-\tau}T\right)^{-\alpha_k}.
\end{align*}
Therefore,
\begin{align*}
\left|\mathcal{I}_{\Delta t}^{0,k-1}(t_k)-\mathcal{I}_{\Delta t,\epsilon}^{0,k-1}(t_k)\right|\leq&\epsilon\left|\int_{0}^{t_{k-1}}\Pi_1u'(\tau)(t_k-\tau)^{-\alpha_k}d\tau\right|\\
\leq&\epsilon\max_{0\leq t\leq t_{k-1}}\left|u'(t)\right|\int_{0}^{t_{k-1}}(t_k-\tau)^{-\alpha_k}d\tau\\
\leq&\frac {\epsilon}{1-\alpha_k}\max_{0\leq t\leq t_{k-1}}\left|u'(t)\right|t_k^{1-\alpha_k}.
\end{align*}
Substituting it into (\ref{truncation-L1-fast-proof}) implies (\ref{truncation-L1-fast}). The proof is completed.
\end{proof}

Finally, we immediately obtain the following theorem to estimate the local truncated error of the fast approximation formula $\prescript{F}{0}{\mathcal{D}}^{\alpha_k}_tu(t_k)$ to the VO Caputo farctional derivative $\prescript{C}{0}{\mathcal{D}}^{\alpha(t)}_tu(t)|_{t=t_k}$.
\begin{theorem}\label{T-truncation-fast}
Suppose $u(t)\in C^2[0,t_k]$. Let the VO Caputo fractional derivative at $t_k$ be as in (\ref{caputo}), its fast approximation formula be defined by (\ref{fast2})--(\ref{fast1}) and $\epsilon$ be the expected accuracy. Then, we have
\begin{align*}
\left|\prescript{C}{0}{\mathcal{D}}^{\alpha(t)}_tu(t)|_{t=t_k}-\prescript{F}{0}{\mathcal{D}}^{\alpha_k}_tu(t_k)\right|=\mathcal{O}\left(\Delta t^{2-\alpha_k}+\epsilon\right).
\end{align*}
\end{theorem}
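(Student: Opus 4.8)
The plan is to prove Theorem~\ref{T-truncation-fast} by a straightforward triangle-inequality decomposition that inserts the $L1$ approximation formula as an intermediate quantity between the exact VO Caputo derivative and its fast approximation. Specifically, I would write
\begin{align*}
\left|\prescript{C}{0}{\mathcal{D}}^{\alpha(t)}_tu(t)|_{t=t_k}-\prescript{F}{0}{\mathcal{D}}^{\alpha_k}_tu(t_k)\right|
&\leq\left|\prescript{C}{0}{\mathcal{D}}^{\alpha(t)}_tu(t)|_{t=t_k}-\prescript{L1}{0}{\mathcal{D}}^{\alpha_k}_tu(t_k)\right|\\
&\quad+\left|\prescript{L1}{0}{\mathcal{D}}^{\alpha_k}_tu(t_k)-\prescript{F}{0}{\mathcal{D}}^{\alpha_k}_tu(t_k)\right|.
\end{align*}
The first term on the right is controlled by Corollary~\ref{L-truncation-L1}, which gives $\mathcal{O}(\Delta t^{2-\alpha_k})$ under the hypothesis $u\in C^2[0,t_k]$. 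The second term is controlled by Lemma~\ref{L-truncation-L1-fast}, which gives $\mathcal{O}(\epsilon)$ under the same hypothesis. Adding the two bounds yields $\mathcal{O}(\Delta t^{2-\alpha_k}+\epsilon)$, which is exactly the claimed estimate.

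The key steps, in order, are: (i) state the triangle inequality above; (ii) invoke Corollary~\ref{L-truncation-L1} for the consistency error of the $L1$ scheme applied to the VO derivative; (iii) invoke Lemma~\ref{L-truncation-L1-fast} for the error introduced by replacing the exact history integral $\mathcal{I}_{\Delta t}^{0,k-1}(t_k)$ with its exponential-sum surrogate $\mathcal{I}_{\Delta t,\epsilon}^{0,k-1}(t_k)$; (iv) combine and conclude. Since both auxiliary results are already established in the excerpt and share the regularity assumption $u\in C^2[0,t_k]$, essentially nothing new needs to be computed here.

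Honestly, there is no real obstacle: the theorem is a corollary of the two preceding results, and the only thing to be mildly careful about is that the implied constants in the two $\mathcal{O}$-bounds depend on $\max_{0\le t\le t_k}|u'(t)|$, $\max_{0\le t\le t_k}|u''(t)|$, and on $\alpha_k$ through factors like $1/(1-\alpha_k)$ and $1/\Gamma(1-\alpha_k)$; one may wish to remark that these remain uniformly bounded because $\alpha_k\in[\underline\alpha,\overline\alpha]\subset(0,1)$, so the constants can be taken independent of $k$. If a uniform-in-$k$ statement is desired one could also replace $\Delta t^{2-\alpha_k}$ by $\Delta t^{2-\overline\alpha}$ for $\Delta t\le 1$, but as stated the theorem only asks for the pointwise bound, so the two-line argument above suffices.
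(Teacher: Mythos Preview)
Your proposal is correct and matches the paper's own proof essentially line for line: the paper also applies the triangle inequality with $\prescript{L1}{0}{\mathcal{D}}^{\alpha_k}_tu(t_k)$ inserted as the intermediate quantity, then invokes (\ref{truncation-L1}) and (\ref{truncation-L1-fast}) to conclude. Your additional remarks about the uniformity of the implied constants in $k$ (via $\alpha_k\in[\underline\alpha,\overline\alpha]$) are a nice clarification that the paper's brief proof leaves implicit.
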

\begin{proof}
The triangle inequality leads to
\begin{align*}
\left|\prescript{C}{0}{\mathcal{D}}^{\alpha(t)}_tu(t)|_{t=t_k}-\prescript{F}{0}{\mathcal{D}}^{\alpha_k}_tu(t_k)\right|\leq &\left|\prescript{C}{0}{\mathcal{D}}^{\alpha(t)}_tu(t)|_{t=t_k}-\prescript{L1}{0}{\mathcal{D}}^{\alpha_k}_tu(t_k)\right|+\left|\prescript{L1}{0}{\mathcal{D}}^{\alpha_k}_tu(t_k)-\prescript{F}{0}{\mathcal{D}}^{\alpha_k}_tu(t_k)\right|.
\end{align*}
The desired result now follows on recalling (\ref{truncation-L1}) and (\ref{truncation-L1-fast}).
\end{proof}

\section{Fast ESA scheme for VO time-fractional diffusion equations}\label{finite-difference-scheme}
In this section, we apply the fast method proposed in Section \ref{fast-approximation} to construct a fast finite difference scheme (the fast ESA scheme) for solving the VO time-fractional diffusion equations (\ref{VOTFDE1})--(\ref{VOTFDE3}).

Let $m$ be a positive integer, $\Delta x=x_R/m$ be the size of spatial grid, and define a spatial partition $x_j=j\Delta x$ for $j=0,1,\ldots,m$. We introduce the notation
\begin{align*}
\delta_x^2 u(x_j,t_k)=\frac{u(x_{j-1},t_k)-2u(x_j,t_k)+u(x_{j+1},t_k)}{\Delta x^2}.
\end{align*}
Consequently,
\begin{align}\label{spatial-derivative}
\frac{\partial^2u(x_j,t_k)}{\partial x^2}=\delta_x^2 u(x_j,t_k)+\mathcal{O}(\Delta x^2).
\end{align}
Denote $u_j^k$ be the approximate solution to $u(x_j,t_k)$, $f_j^k=f(x_j,t_k)$ and $\varphi_j=\varphi(x_j)$.
The $L1$ scheme is obtained as (see \cite{Shen-2012})
\begin{align}
&\prescript{L1}{0}{\mathcal{D}}^{\alpha_k}_tu_j^k=\delta_x^2u_j^k+f_j^k,\ \ j=1,2,\ldots,m-1,\ \ k=1,2,\ldots,n,\label{L1-scheme1}\\
&u_j^0=\varphi_j,\ \ j=1,2,\ldots,m-1,\label{L1-scheme2}\\
&u_0^k=u_m^k=0,\ \ k=1,2,\ldots,n,\label{L1-scheme3}
\end{align}
where $\tensor*[^{L1}_0]{\mathcal{D}}{}^{\alpha_k}_tu_j^k$ is defined by (\ref{L1}).

\subsection{Preliminary work}
Using (\ref{fast2}) and the $L1$ scheme (\ref{L1-scheme1}), we propose the fast ESA scheme for (\ref{VOTFDE1})--(\ref{VOTFDE3}) as
\begin{align}
&\prescript{F}{0}{\mathcal{D}}^{\alpha_k}_tu_j^k=\delta_x^2u_j^k+f_j^k,\ \ j=1,2,\ldots,m-1,\ \ k=1,2,\ldots,n,\label{fast-scheme1}\\
&u_j^0=\varphi_j,\ \ j=1,2,\ldots,m-1,\label{fast-scheme2}\\
&u_0^k=u_m^k=0,\ \ k=1,2,\ldots,n,\label{fast-scheme3}
\end{align}
where $\prescript{F}{0}{\mathcal{D}}^{\alpha_k}_tu_j^k$ is rewritten as
\begin{align}\label{fast}
\prescript{F}{0}{\mathcal{D}}^{\alpha_k}_tu_j^k=&\frac {\Delta t^{-\alpha_k}}{\Gamma(2-\alpha_k)}u_j^k-\frac {\Delta t^{-\alpha_k}}{\Gamma(2-\alpha_k)}\sum_{l=1}^{k-1}b_l^ku_j^l-\frac {\Delta t^{-\alpha_k}}{\Gamma(2-\alpha_k)}b_0^ku_j^0,
\end{align}
where
\begin{align*}
&b_0^k=T^{-\alpha_k}\Delta t^{\alpha_k-1}(1-\alpha_k)\sum_{i=\underline{N}+1}^{\overline{N}}\theta_{k,i}\int_{t_0}^{t_1}e^{-\lambda_i(t_k-\tau)/T} d\tau,\\
&b_{k-1}^k=1-T^{-\alpha_k}\Delta t^{\alpha_k-1}(1-\alpha_k)\sum_{i=\underline{N}+1}^{\overline{N}}\theta_{k,i}\int_{t_{k-2}}^{t_{k-1}}e^{-\lambda_i(t_k-\tau)/T} d\tau,
\end{align*}
and
\begin{align*}
b_l^k=&T^{-\alpha_k}\Delta t^{\alpha_k-1}(1-\alpha_k)\sum_{i=\underline{N}+1}^{\overline{N}}\theta_{k,i} \left[\int_{t_l}^{t_{l+1}}e^{-\lambda_i(t_k-\tau)/T} d\tau-\int_{t_{l-1}}^{t_l}e^{-\lambda_i(t_k-\tau)/T} d\tau\right]
\end{align*}
for $l=1,2,\ldots,k-2$. In particular $b_0^1=1$.

For convenience, we now denote \begin{align*}
s_k=\frac {\Delta x^2\Delta t^{-\alpha_k}}{\Gamma(2-\alpha_k)}.
\end{align*}
Then, (\ref{fast-scheme1}) can be written as
\begin{align}\label{re-scheme1}
-u_{j+1}^k+(2+s_k)u_j^k-u_{j-1}^k=s_k\sum_{l=1}^{k-1}b_l^ku_j^l+s_kb_0^ku_j^0+\Delta x^ 2f_j^k.
\end{align}

Firstly, we give the following lemma for those coefficients in (\ref{fast}), which plays a vital role in the investigation of the stability and convergence of the fast ESA scheme (\ref{fast-scheme1})--(\ref{fast-scheme3}).
\begin{lemma}\label{L-coe}
Let $\{b_l^k\}_{l=0}^{k-1}$ $(k=1,2,\ldots,n)$ be defined by (\ref{fast}) and $\epsilon$ be the expected accuracy. Denote
\begin{align*}
\epsilon_k=
\left\{
  \begin{array}{ll}
    0, & {k=1,} \vspace{1mm}\\
    \frac{\epsilon}{1+\epsilon}T^{-\alpha_k}\Delta t^{\alpha_k-1}(1-\alpha_k)\sum\limits_{i=\underline{N}+1}^{\overline{N}}\theta_{k,i}\int_{t_{k-2}}^{t_{k-1}}e^{-\lambda_i(t_k-\tau)/T} d\tau, & {k=2,3,\ldots,n.}
  \end{array}
\right.
\end{align*}
Then, we have
 \begin{description}
   \item[i).] $\{b_l^k\}_{l=0}^{k-2}>0,\ \ k=2,3,\ldots,n$;
   \item[ii).] $b_{k-1}^k+\epsilon_k>0,\ \ k=1,2,\ldots,n$;
   \item[iii).] $|b_{k-1}^k|\leq b_{k-1}^k+2\epsilon_k,\ \ k=1,2,\ldots,n$;
   \item[iv).] $\sum\limits_{l=0}^{k-1}b_l^k+2\epsilon_k\leq1+2\epsilon,\ \ k=1,2,\ldots,n$.
 \end{description}
\end{lemma}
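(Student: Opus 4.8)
The plan is to establish the four items by working directly from the integral definitions of the $b_l^k$ in \eqref{fast}, exploiting the two-sided bound on the exponential sum furnished by Lemma~\ref{L-soe}. The central observation is that each $b_l^k$ is (up to the fixed positive prefactor $T^{-\alpha_k}\Delta t^{\alpha_k-1}(1-\alpha_k)$) built from the quantities $J_l^k:=\sum_{i=\underline N+1}^{\overline N}\theta_{k,i}\int_{t_{l-1}}^{t_l}e^{-\lambda_i(t_k-\tau)/T}\,d\tau$, which approximate $\int_{t_{l-1}}^{t_l}((t_k-\tau)/T)^{-\alpha_k}\,d\tau$ with relative error at most $\epsilon$ by \eqref{soe}. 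So the first step is to record, for $1\le l\le k-1$, the sandwich
\begin{align*}
(1-\epsilon)\int_{t_{l-1}}^{t_l}\Big(\tfrac{t_k-\tau}{T}\Big)^{-\alpha_k}d\tau\ \le\ J_l^k\ \le\ (1+\epsilon)\int_{t_{l-1}}^{t_l}\Big(\tfrac{t_k-\tau}{T}\Big)^{-\alpha_k}d\tau,
\end{align*}
and to note that $\epsilon_k=\frac{\epsilon}{1+\epsilon}\,T^{-\alpha_k}\Delta t^{\alpha_k-1}(1-\alpha_k)\,J_{k-1}^k\ge 0$ is exactly the ``slack'' needed to absorb this relative error at the top interval.

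For item~(i), with $1\le l\le k-2$ we have $b_l^k=T^{-\alpha_k}\Delta t^{\alpha_k-1}(1-\alpha_k)\,(J_{l+1}^k-J_l^k)$; since $\tau\mapsto((t_k-\tau)/T)^{-\alpha_k}$ is increasing on $[0,t_{k-1}]$, the true integral over $[t_l,t_{l+1}]$ strictly exceeds that over $[t_{l-1},t_l]$, and one must check this monotone gap dominates the $O(\epsilon)$ perturbation coming from the two quadratures. The cleanest route is to compare against the exact integrals: $J_{l+1}^k-J_l^k \ge (1-\epsilon)\int_{t_l}^{t_{l+1}}(\cdot)^{-\alpha_k} - (1+\epsilon)\int_{t_{l-1}}^{t_l}(\cdot)^{-\alpha_k}$, then bound the $\epsilon$ terms and use convexity/monotonicity of the power function to show the result stays positive; alternatively, and more robustly, observe that for fixed $i$ the map $\tau\mapsto e^{-\lambda_i(t_k-\tau)/T}$ is itself increasing, so \emph{each} summand $\theta_{k,i}\big(\int_{t_l}^{t_{l+1}}-\int_{t_{l-1}}^{t_l}\big)e^{-\lambda_i(t_k-\tau)/T}d\tau$ is already positive (a shift of an increasing function), hence $J_{l+1}^k-J_l^k>0$ termwise with no appeal to $\epsilon$ at all, provided $\theta_{k,i}>0$, which holds since $h>0$ and $\Gamma(\alpha_k)>0$. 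This termwise argument is the slick version and I would lead with it; for item~(ii), $b_{k-1}^k+\epsilon_k = 1 - T^{-\alpha_k}\Delta t^{\alpha_k-1}(1-\alpha_k)\big(J_{k-1}^k-\frac{\epsilon}{1+\epsilon}J_{k-1}^k\big) = 1-\frac{1}{1+\epsilon}T^{-\alpha_k}\Delta t^{\alpha_k-1}(1-\alpha_k)J_{k-1}^k$, and using $J_{k-1}^k\le(1+\epsilon)\int_{t_{k-2}}^{t_{k-1}}((t_k-\tau)/T)^{-\alpha_k}d\tau$ this is $\ge 1 - T^{-\alpha_k}\Delta t^{\alpha_k-1}(1-\alpha_k)\int_{t_{k-2}}^{t_{k-1}}((t_k-\tau)/T)^{-\alpha_k}d\tau = 1 - \Delta t^{\alpha_k-1}(1-\alpha_k)\int_{t_{k-2}}^{t_{k-1}}(t_k-\tau)^{-\alpha_k}d\tau$; evaluating the last integral as $\frac{1}{1-\alpha_k}\big[(2\Delta t)^{1-\alpha_k}-(\Delta t)^{1-\alpha_k}\big]$ gives lower bound $1-(2^{1-\alpha_k}-1)>0$ since $\alpha_k\in(0,1)$ forces $2^{1-\alpha_k}<2$.

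Item~(iii) is immediate from (ii): if $b_{k-1}^k\ge 0$ then $|b_{k-1}^k|=b_{k-1}^k\le b_{k-1}^k+2\epsilon_k$ trivially; if $b_{k-1}^k<0$ then $|b_{k-1}^k|=-b_{k-1}^k$ and (ii) gives $-b_{k-1}^k<\epsilon_k\le b_{k-1}^k+2\epsilon_k$ precisely when $b_{k-1}^k+2\epsilon_k > -b_{k-1}^k+\epsilon_k$, i.e. $2b_{k-1}^k+\epsilon_k>0$... one should instead argue directly that $|b_{k-1}^k| = \max(b_{k-1}^k,-b_{k-1}^k)$ and since $-b_{k-1}^k \le \epsilon_k \le b_{k-1}^k + 2\epsilon_k$ (the last inequality needing $b_{k-1}^k\ge -\epsilon_k$, again from (ii)), both candidates are $\le b_{k-1}^k+2\epsilon_k$. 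For item~(iv), I would telescope: for $k\ge 2$, $\sum_{l=0}^{k-1}b_l^k = b_{k-1}^k + b_0^k + \sum_{l=1}^{k-2}(J_{l+1}^k-J_l^k)\cdot c_k$ where $c_k=T^{-\alpha_k}\Delta t^{\alpha_k-1}(1-\alpha_k)$, and the middle sum telescopes to $c_k(J_{k-1}^k - J_1^k)$; adding $b_0^k=c_k J_1^k$ and $b_{k-1}^k = 1-c_k J_{k-1}^k$ yields $\sum_{l=0}^{k-1}b_l^k = 1$ \emph{exactly} were the quadrature exact, so in general $\sum_{l=0}^{k-1}b_l^k = 1 + c_k(\widehat J_{k-1}^k - J_{k-1}^k)$ for the appropriate error term, bounded in absolute value by $c_k\,\epsilon\int_{t_{k-2}}^{t_{k-1}}(\cdot)^{-\alpha_k} = (1+\epsilon)\epsilon_k\le 2\epsilon_k$; hence $\sum_{l=0}^{k-1}b_l^k + 2\epsilon_k \le 1 + 4\epsilon_k$, and one finishes by bounding $\epsilon_k\le \frac{\epsilon}{1+\epsilon}\cdot(2^{1-\alpha_k}-1)\le \epsilon$ so that $1+4\epsilon_k$ is... too large; the sharper path is to keep the telescoped identity $\sum_{l=0}^{k-1}b_l^k = b_{k-1}^k + c_k J_{k-1}^k$ exactly (the $J_1^k$ and $b_0^k$ cancel and the telescope leaves only the endpoint), whence $\sum_{l=0}^{k-1}b_l^k + 2\epsilon_k = b_{k-1}^k + c_k J_{k-1}^k + 2\epsilon_k$, and since $c_kJ_{k-1}^k = 1 - b_{k-1}^k$ by definition of $b_{k-1}^k$, this collapses to $1 + 2\epsilon_k$; finally $\epsilon_k = \frac{\epsilon}{1+\epsilon}c_kJ_{k-1}^k = \frac{\epsilon}{1+\epsilon}(1-b_{k-1}^k)\le \frac{\epsilon}{1+\epsilon}(1+\epsilon_k)$ by (ii), giving $\epsilon_k\le\epsilon$, hence $\sum_{l=0}^{k-1}b_l^k+2\epsilon_k\le 1+2\epsilon$. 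The main obstacle is bookkeeping: making sure the telescoping of the $b_l^k$ really does reduce (iv) to a single endpoint term, and that every $\epsilon$-perturbation in (i) and the endpoint estimates in (ii)--(iv) is controlled by the explicitly chosen $\epsilon_k$ rather than by a cruder bound; the termwise-positivity trick for (i) and the exact telescoping identity for (iv) are what make the argument clean rather than a messy chase of error constants.
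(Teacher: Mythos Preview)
Your proposal is correct and follows essentially the same route as the paper: termwise positivity for (i) (the paper merely calls this ``a straightforward calculation''), the same integral comparison against $(t_k-\tau)^{-\alpha_k}$ for (ii), and the exact telescoping identity $\sum_{l=0}^{k-1}b_l^k=1$ for (iv), after which one only needs $\epsilon_k\le\epsilon$. The one place the paper is slicker is (iii): rather than a case split, it writes $|b_{k-1}^k|=|(b_{k-1}^k+\epsilon_k)-\epsilon_k|\le (b_{k-1}^k+\epsilon_k)+\epsilon_k$ directly from (ii) and the triangle inequality; and for (iv) the paper bounds $\epsilon_k$ via the integral $\epsilon_k\le\epsilon(2^{1-\alpha_k}-1)\le\epsilon$ instead of your bootstrap through (ii), but both arguments are valid.
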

\begin{proof}
Obviously, the lemma is valid for $k=1$. It remains to prove the lemma for $k\geq2$.

\textbf{i).} The result is obtained by a straight forward calculation.

\textbf{ii).} By the definition of $b_{k-1}^k$ and $\epsilon_k$, we have
\begin{align*}
b_{k-1}^k+\epsilon_k=&1-\frac 1{1+\epsilon}T^{-\alpha_k}\Delta t^{\alpha_k-1}(1-\alpha_k)\sum_{i=\underline{N}+1}^{\overline{N}}\theta_{k,i}\int_{t_{k-2}}^{t_{k-1}}e^{-\lambda_i(t_k-\tau)/T} d\tau\\
\geq&1-\Delta t^{\alpha_k-1}(1-\alpha_k)\int_{t_{k-2}}^{t_{k-1}}(t_k-\tau)^{-\alpha_k} d\tau\\
=&2-2^{1-\alpha_k}>0.
\end{align*}

\textbf{iii).} The triangle inequality and \textbf{ii).} imply that
\begin{align*}
\left|b_{k-1}^k\right|=\left|b_{k-1}^k+\epsilon_k-\epsilon_k\right|\leq\left|b_{k-1}^k+\epsilon_k\right|+\left|\epsilon_k\right|=&b_{k-1}^k+2\epsilon_k.
\end{align*}

\textbf{iv).} Summing up $b_l^k$ for $l$ from $0$ to $k-1$, and rearranging the integral terms, we have
\begin{align*}
\sum_{l=0}^{k-1}b_l^k=&T^{-\alpha_k}\Delta t^{\alpha_k-1}(1-\alpha_k)\sum_{i=\underline{N}+1}^{\overline{N}}\theta_{k,i}\int_{t_0}^{t_1}e^{-\lambda_i(t_k-\tau)/T} d\tau\\
&+T^{-\alpha_k}\Delta t^{\alpha_k-1}(1-\alpha_k) \sum_{l=1}^{k-2}\left\{\sum_{i=\underline{N}+1}^{\overline{N}}\theta_{k,i}\left[\int_{t_l}^{t_{l+1}}e^{-\lambda_i(t_k-\tau)/T} d\tau-\int_{t_{l-1}}^{t_l}e^{-\lambda_i(t_k-\tau)/T} d\tau\right]\right\}\\
&+1-T^{-\alpha_k}\Delta t^{\alpha_k-1}(1-\alpha_k)\sum_{i=\underline{N}+1}^{\overline{N}}\theta_{k,i} \int_{t_{k-2}}^{t_{k-1}}e^{-\lambda_i(t_k-\tau)/T} d\tau\\
=&1.
\end{align*}
Therefore, we obtain
\begin{align*}
\sum_{l=0}^{k-1}b_l^k+2\epsilon_k=&1+2\frac{\epsilon}{1+\epsilon}T^{-\alpha_k}\Delta t^{\alpha_k-1}(1-\alpha_k)\sum_{i=\underline{N}+1}^{\overline{N}}\theta_{k,i}\int_{t_{k-2}}^{t_{k-1}}e^{-\lambda_i(t_k-\tau)/T} d\tau\\
\leq&1+2\epsilon\Delta t^{\alpha_k-1}(1-\alpha_k)\int_{t_{k-2}}^{t_{k-1}}(t_k-\tau)^{-\alpha_k} d\tau\\
=&1+2\epsilon(2^{1-\alpha_k}-1)\\
\leq&1+2\epsilon.
\end{align*}
The proof of the lemma is completed.
\end{proof}

\subsection{Stability}\label{stability}
Now, we discuss the stability of the fast ESA scheme (\ref{fast-scheme1})--(\ref{fast-scheme3}). To investigate the stability, we denote
\begin{align*}
u^k=\left[u_1^k,u_2^k,\ldots,u_{m-1}^k\right]^\tT,\ \ f^k=\left[f_1^k,f_2^k,\ldots,f_{m-1}^k\right]^\tT,\ \ k=0,1,\ldots,n,
\end{align*}
and
\begin{align*}
c_f=\max\limits_{1\leq k\leq n}\|f^k\|_\infty,\ \ c_\gamma=\max\limits_{1\leq k\leq n}\Gamma(1-\alpha_k).
\end{align*}
According to (\ref{re-scheme1}) and Lemma \ref{L-coe}, we have
\begin{align*}
\left|(2+s_k)u_j^k\right|\leq&\left|u_{j+1}^k\right|+\left|u_{j-1}^k\right|+s_k(b_{k-1}^k+2\epsilon_k)\left|u_j^{k-1}\right|+s_k\sum_{l=1}^{k-2}b_l^k\left|u_j^l\right|\\
&+s_kb_0^k\left|u_j^0\right|+\Delta x^ 2\left|f_j^k\right|,\ \ k=2,3,\ldots,n.
\end{align*}
Therefore,
\begin{align}\label{norm-stability1}
\|u^k\|_\infty\leq&(b_{k-1}^k+2\epsilon_k)\|u^{k-1}\|_\infty+\sum_{l=1}^{k-2}b_l^k\|u^l\|_\infty\nonumber\\
&+b_0^k\left(\|u^0\|_\infty+\frac{\Delta x^2}{b_0^ks_k}\|f^k\|_\infty\right),\ \ k=2,3,\ldots,n.
\end{align}
Similarly, when $k=1$, we have
\begin{align}\label{norm-stability2}
\|u^1\|_\infty\leq b_0^1\left(\|u^0\|_\infty+\frac{\Delta x^2}{b_0^1s_1}\|f^1\|_\infty\right).
\end{align}

\begin{theorem}\label{T-stability}
Suppose the expected accuracy $\epsilon\leq\mO(\Delta t^{2-\overline{\alpha}})$ and $\{u_j^k|0\leq j\leq m, 0\leq k\leq n\}$ is the solution of (\ref{fast-scheme1})--(\ref{fast-scheme3}). Then, we have
\begin{align*}
\|u^k\|_\infty\leq e^T\|u^0\|_\infty+c_fc_\gamma\frac {e^T}{1-\epsilon}T^{\overline{\alpha}},\ \ k=1,2,\ldots,n.
\end{align*}
\end{theorem}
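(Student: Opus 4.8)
\textbf{Proof proposal for Theorem \ref{T-stability}.}

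The plan is to iterate the recursive bounds (\ref{norm-stability1})--(\ref{norm-stability2}), which are already in hand, by induction on the time level $k$, using the four properties collected in Lemma \ref{L-coe}. Before starting the induction I would establish one auxiliary estimate: that the coefficient $\frac{\Delta x^2}{b_0^k s_k}$ occurring in (\ref{norm-stability1})--(\ref{norm-stability2}) is bounded \emph{uniformly in $k$} by $\frac{c_\gamma T^{\overline{\alpha}}}{1-\epsilon}$. Writing $\rho=\frac{c_\gamma T^{\overline{\alpha}}}{1-\epsilon}$ and $G=\|u^0\|_\infty+\rho c_f$, the target inequality is exactly $\|u^k\|_\infty\le e^{T}G$, so it suffices to prove the sharper-looking but more convenient claim $\|u^k\|_\infty\le(1+2\epsilon)^{k}G$ and then absorb $(1+2\epsilon)^{k}\le(1+2\epsilon)^{n}\le e^{T}$ at the end.

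For the auxiliary estimate, recall $s_k=\Delta x^2\Delta t^{-\alpha_k}/\Gamma(2-\alpha_k)$ and substitute the definition of $b_0^k$ into the product $b_0^k s_k$. Applying the lower half of the two-sided bound of Lemma \ref{L-soe} (with $\gamma=\alpha_k$) to the kernel $\big((t_k-\tau)/T\big)^{-\alpha_k}$ on $\tau\in[t_0,t_1]$, which is legitimate there since $0<\Delta t/T\le(t_k-\tau)/T\le1$, one gets $b_0^k\ge(1-\epsilon)\big(k^{1-\alpha_k}-(k-1)^{1-\alpha_k}\big)$; the mean value theorem then gives $k^{1-\alpha_k}-(k-1)^{1-\alpha_k}\ge(1-\alpha_k)k^{-\alpha_k}$, and hence $b_0^k s_k\ge(1-\epsilon)\Delta x^2 t_k^{-\alpha_k}/\Gamma(1-\alpha_k)$. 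Since $t_k\le T$ and $T\ge1$ give $t_k^{\alpha_k}\le T^{\overline{\alpha}}$, and $\Gamma(1-\alpha_k)\le c_\gamma$, this is precisely $\Delta x^2/(b_0^k s_k)\le\rho$ (for $k=1$ one has $b_0^1=1$ and the same inequality follows directly from $\Delta t^{\alpha_1}\Gamma(2-\alpha_1)\le T^{\overline{\alpha}}c_\gamma$). Consequently $\|u^0\|_\infty+\frac{\Delta x^2}{b_0^k s_k}\|f^k\|_\infty\le\|u^0\|_\infty+\rho c_f=G$ for every $k$.

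The induction is then essentially a discrete Gr\"onwall argument. For $k=1$, (\ref{norm-stability2}) with $b_0^1=1$ gives $\|u^1\|_\infty\le G\le(1+2\epsilon)G$; also $\|u^0\|_\infty\le G$ trivially. Assuming $\|u^l\|_\infty\le(1+2\epsilon)^{l}G$ for $l=0,1,\ldots,k-1$, I would feed this into (\ref{norm-stability1}), bound each factor $(1+2\epsilon)^{l}$ with $l\le k-1$ by $(1+2\epsilon)^{k-1}$, and invoke Lemma \ref{L-coe}: parts i)--iii) guarantee that $b_{k-1}^k+2\epsilon_k$ and $b_0^k,b_1^k,\ldots,b_{k-2}^k$ are all nonnegative, while part iv) gives $(b_{k-1}^k+2\epsilon_k)+\sum_{l=1}^{k-2}b_l^k+b_0^k=\sum_{l=0}^{k-1}b_l^k+2\epsilon_k\le1+2\epsilon$. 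The right-hand side of (\ref{norm-stability1}) thus collapses to $(1+2\epsilon)^{k-1}G\cdot(1+2\epsilon)=(1+2\epsilon)^{k}G$, closing the induction. Finally, $\epsilon\le\mO(\Delta t^{2-\overline{\alpha}})$ with $\overline{\alpha}<1$ forces $2n\epsilon=\mO\big(T\Delta t^{1-\overline{\alpha}}\big)$, which is $\le T$ once $\Delta t$ is small; therefore $(1+2\epsilon)^{k}\le(1+2\epsilon)^{n}\le e^{2n\epsilon}\le e^{T}$ and $\|u^k\|_\infty\le e^{T}G=e^{T}\|u^0\|_\infty+c_f c_\gamma\frac{e^{T}}{1-\epsilon}T^{\overline{\alpha}}$.

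The main obstacle is the uniform lower bound on $b_0^k s_k$ in the second paragraph: everything hinges on extracting, from the one-sided ESA inequality of Lemma \ref{L-soe} and the elementary estimate $k^{1-\alpha_k}-(k-1)^{1-\alpha_k}\ge(1-\alpha_k)k^{-\alpha_k}$, a bound on $\Delta x^2/(b_0^k s_k)$ that is independent of $k$ and whose $\alpha_k$-dependence is absorbed entirely into $\overline{\alpha}$ and $c_\gamma$; it is also this step that is responsible for the factor $\frac{1}{1-\epsilon}$ in the final estimate. Once the constant $\rho$ is pinned down, the Gr\"onwall-type induction driven by Lemma \ref{L-coe}\,iv) and the closing bound $(1+2\epsilon)^{n}\le e^{T}$ are mechanical.
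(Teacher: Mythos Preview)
Your proposal is correct and follows essentially the same route as the paper: a lower bound $b_0^k\ge(1-\epsilon)(1-\alpha_k)k^{-\alpha_k}$ obtained from the one-sided ESA estimate of Lemma~\ref{L-soe}, yielding the uniform bound $\Delta x^2/(b_0^k s_k)\le c_\gamma T^{\overline{\alpha}}/(1-\epsilon)$, followed by an induction on $k$ driven by Lemma~\ref{L-coe}\,iv) to reach $\|u^k\|_\infty\le(1+2\epsilon)^kG$, and finally $(1+2\epsilon)^n\le e^T$ from the hypothesis $\epsilon\le\mO(\Delta t^{2-\overline{\alpha}})$. Your packaging via $G$ and $\rho$ and your explicit justification $e^{2n\epsilon}\le e^T$ are cosmetic differences; the structure and the key steps coincide with the paper's proof.
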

\begin{proof}
Note (\ref{norm-stability1})--(\ref{norm-stability2}), by a straight forward calculation, we have
\begin{align*}
b_0^k\geq(1-\epsilon)\left[k^{1-\alpha_k}-(k-1)^{1-\alpha_k}\right]\geq(1-\epsilon)(1-\alpha_k)k^{-\alpha_k},\ \ k=2,3,\ldots,n.
\end{align*}
Note that $b_0^1=1\geq(1-\epsilon)(1-\alpha_1)$. Thus
\begin{align*}
\frac{\Delta x^2}{b_0^ks_k}\leq t_k^{\alpha_k}\frac{\Gamma(1-\alpha_k)}{1-\epsilon},\ \ k=1,2,\ldots,n.
\end{align*}
Substituting it into (\ref{norm-stability1})--(\ref{norm-stability2}), we have
\begin{align}\label{re-norm-stability1}
\|u^k\|_\infty\leq&(b_{k-1}^k+2\epsilon_k)\|u^{k-1}\|_\infty+\sum_{l=1}^{k-2}b_l^k\|u^l\|_\infty\nonumber\\
&+b_0^k\left[\|u^0\|_\infty+t_k^{\alpha_k}\frac{\Gamma(1-\alpha_k)}{1-\epsilon}\|f^k\|_\infty\right],\ \ k=2,3,\ldots,n,
\end{align}
and
\begin{align}\label{re-norm-stability2}
\|u^1\|_\infty\leq b_0^1\left[\|u^0\|_\infty+t_1^{\alpha_1}\frac{\Gamma(1-\alpha_1)}{1-\epsilon}\|f^1\|_\infty\right].
\end{align}
Next the mathematical induction will be used on $k$ to prove that
\begin{align}\label{stability-induction}
\|u^k\|_\infty\leq&(1+2\epsilon)^k\|u^0\|_\infty+c_fc_\gamma\frac{(1+2\epsilon)^k}{1-\epsilon}T^{\overline{\alpha}}.
\end{align}
For the case $k=1$, in view of (\ref{re-norm-stability2}) and $b_0^1=1$, we have
\begin{align*}
\|u^1\|_\infty\leq&b_0^1\left[\|u^0\|_\infty+t_1^{\alpha_1}\frac{\Gamma(1-\alpha_1)}{1-\epsilon}\|f^1\|_\infty\right]\\
\leq&(1+2\epsilon)\|u^0\|_\infty+c_fc_\gamma\frac{1+2\epsilon}{1-\epsilon}T^{\overline{\alpha}},
\end{align*}
which implies (\ref{stability-induction}) is valid for $k=1$. Suppose that
\begin{align}\label{induction}
\|u^{k_0}\|_\infty\leq (1+2\epsilon)^{k_0}\|u^0\|_\infty+c_fc_\gamma\frac{(1+2\epsilon)^{k_0}}{1-\epsilon}T^{\overline{\alpha}}
,\ \ k_0=1,2,\ldots,k-1.
\end{align}
Combing with (\ref{re-norm-stability1}), (\ref{induction}), and Lemma \ref{L-coe}, we obtain
\begin{align*}
\|u^k\|_\infty
\leq&(b_{k-1}^k+2\epsilon_k)\left[(1+2\epsilon)^{k-1}\|u^0\|_\infty+c_fc_\gamma\frac{(1+2\epsilon)^{k-1}}{1-\epsilon}T^{\overline{\alpha}}\right]\\
&+\sum_{l=1}^{k-2}b_l^k\left[(1+2\epsilon)^l\|u^0\|_\infty+c_fc_\gamma\frac{(1+2\epsilon)^l}{1-\epsilon}T^{\overline{\alpha}}\right]\\
&+b_0^k\left[\|u^0\|_\infty+t_k^{\alpha_k}\frac{\Gamma(1-\alpha_k)}{1-\epsilon}\|f^k\|_\infty\right]\\
\leq&(1+2\epsilon)^{k-1}\left(\sum_{l=0}^{k-1}b_l^k+2\epsilon_k\right)\|u^0\|_\infty+c_fc_\gamma\frac{(1+2\epsilon)^{k-1}}{1-\epsilon}\left(\sum_{l=0}^{k-1}b_l^k+2\epsilon_k\right)T^{\overline{\alpha}}\\
\leq&(1+2\epsilon)^k\|u^0\|_\infty+c_fc_\gamma\frac{(1+2\epsilon)^k}{1-\epsilon}T^{\overline{\alpha}}.
\end{align*}
By the principle of induction, (\ref{stability-induction}) is valid for $k=1,2,\ldots,n$. Noticing the fact that $\epsilon\leq\mO(\Delta t^{2-\overline{\alpha}})$, we obtain
\begin{align*}
\|u^k\|_\infty\leq&(1+2\epsilon)^n\|u^0\|_\infty+c_fc_\gamma\frac{(1+2\epsilon)^n}{1-\epsilon}T^{\overline{\alpha}}\\
\leq&e^T\|u^0\|_\infty+c_fc_\gamma\frac{e^T}{1-\epsilon}T^{\overline{\alpha}}.
\end{align*}
The proof is completed.
\end{proof}

Theorem \ref{T-stability} reveals the stability of the fast ESA scheme (\ref{fast-scheme1})--(\ref{fast-scheme3}) with respect to the initial value and the source term. The theorem is also used to study the convergence of this scheme.
\subsection{Convergence}\label{Convergence}
The task is to investigate the convergence of the fast ESA scheme (\ref{fast-scheme1})--(\ref{fast-scheme3}). According to Theorem \ref{T-truncation-fast} and (\ref{spatial-derivative}), we have
\begin{align}\label{fast-scheme1-exact}
-u(x_{j+1},t_k)+\left(2+s_k\right)u(x_j,t_k)-u(x_{j-1},t_k)=&s_k\sum_{l=1}^{k-1}b_l^ku(x_j,t_l)+s_kb_0^ku(x_j,t_0)\nonumber\\
&+\Delta x^2f(x_j,t_k)+\Delta x^2r_j^k,
\end{align}
where
\begin{align}\label{L-r}
r_j^k=\mathcal{O}\left(\epsilon+\Delta t^{2-\alpha_k}+\Delta x^2\right).
\end{align}

Denote $E_{j}^k=u(x_j,t_k)-u_j^k$ for $j=1,2,\ldots,m-1$, $k=0,1,\ldots,n$ and
\begin{align*}
E^k=\left[E_1^k,E_2^k,\ldots,E_{m-1}^k\right]^\tT,\ \ r^k=\left[r_1^k,r_2^k,\ldots,r_{m-1}^k\right]^\tT,\ \ c_r=\max\limits_{1\leq k\leq n}\|r^k\|_\infty.
\end{align*}

Subtracting (\ref{re-scheme1}) from (\ref{fast-scheme1-exact}), we obtain the error equation for $j=1,2,\ldots,m-1$, $k=1,2,\ldots,n$
\begin{align}
&-E_{j+1}^k+\left(2+s_k\right)E_j^k-E_{j-1}^k=s_k\sum_{l=1}^{k-1}b_l^kE_j^l+s_kb_0^kE_j^0+\Delta x^2r_j^k,\label{error1}
\end{align}
with
\begin{align}
&E_j^0=0,\ \ j=1,2,\ldots,m-1,\label{error2}\\
&E_0^k=E_m^k=0,\ \ k=1,2,\ldots,n.\label{error3}
\end{align}
The following theorem states the convergence of the fast ESA scheme.
\begin{theorem}\label{T-convergence}
Suppose the expected accuracy $\epsilon\leq\mathcal{O}(\Delta t^{2-\overline{\alpha}})$, $u(x,t)\in C_{x,t}^{4,2}([0,x_R]\times[0,T])$ and $\{u_j^k|0\leq j\leq m, 0\leq k\leq n\}$ are solutions of the problem (\ref{VOTFDE1})--(\ref{VOTFDE3}) and the fast ESA scheme (\ref{fast-scheme1})--(\ref{fast-scheme3}), respectively. Let $E_j^k=u(x_j,t_k)-u_j^k$. Then, we have
\begin{align*}
\|E^k\|_\infty=\mathcal{O}\left(\epsilon+\Delta t^{2-\overline{\alpha}}+\Delta x^2\right),\ \ k=1,2,\ldots,n.
\end{align*}
\end{theorem}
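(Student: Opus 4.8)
The plan is to mimic the stability argument of Theorem~\ref{T-stability} but applied to the error equation~(\ref{error1})--(\ref{error3}), exploiting the fact that the error equation has exactly the same algebraic structure as the scheme~(\ref{re-scheme1}) except that the homogeneous initial data $E_j^0=0$ is replaced by a forcing term $\Delta x^2 r_j^k$ with $\|r^k\|_\infty\le c_r=\mathcal{O}(\epsilon+\Delta t^{2-\overline\alpha}+\Delta x^2)$ by~(\ref{L-r}). First I would rewrite~(\ref{error1}) in the form $-E_{j+1}^k+(2+s_k)E_j^k-E_{j-1}^k=s_k\sum_{l=1}^{k-2}b_l^k E_j^l+s_k(b_{k-1}^k+2\epsilon_k)E_j^{k-1}-2\epsilon_k s_k E_j^{k-1}+s_kb_0^kE_j^0+\Delta x^2 r_j^k$, so that every coefficient multiplying a previous-level error is nonnegative (using parts i) and ii) of Lemma~\ref{L-coe}), and $E_j^0=0$ kills the $b_0^k$ term.

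Next I would take absolute values and pass to the $\|\cdot\|_\infty$ norm exactly as in the derivation of~(\ref{norm-stability1}). Since $(2+s_k)|E_j^k|\le|E_{j+1}^k|+|E_{j-1}^k|+s_k\sum_{l=1}^{k-2}b_l^k|E_j^l|+s_k(b_{k-1}^k+2\epsilon_k)|E_j^{k-1}|+\Delta x^2|r_j^k|$, choosing $j$ to attain $\|E^k\|_\infty$ and cancelling gives
\begin{align*}
\|E^k\|_\infty\le (b_{k-1}^k+2\epsilon_k)\|E^{k-1}\|_\infty+\sum_{l=1}^{k-2}b_l^k\|E^l\|_\infty+\frac{\Delta x^2}{s_k}\|r^k\|_\infty,
\end{align*}
and for $k=1$ simply $\|E^1\|_\infty\le \frac{\Delta x^2}{s_1}\|r^1\|_\infty$. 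Now $\Delta x^2/s_k=\Delta t^{\alpha_k}\Gamma(2-\alpha_k)=t_k^{\alpha_k}k^{-\alpha_k}\Gamma(2-\alpha_k)\le T^{\overline\alpha}\Gamma(2-\alpha_k)$; more usefully I would keep the factor $k^{-\alpha_k}$ and note $b_0^k\ge(1-\epsilon)(1-\alpha_k)k^{-\alpha_k}$ as in Theorem~\ref{T-stability}, so that $\Delta x^2/s_k\le \frac{b_0^k}{1-\epsilon}t_k^{\alpha_k}\Gamma(1-\alpha_k)\le \frac{b_0^k}{1-\epsilon}T^{\overline\alpha}c_\gamma$. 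This puts the inequality into precisely the template of~(\ref{re-norm-stability1})--(\ref{re-norm-stability2}) with $\|u^0\|_\infty$ replaced by $0$ and $\|f^k\|_\infty$ replaced by $\|r^k\|_\infty\le c_r$.

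Then I would run the same induction as in Theorem~\ref{T-stability} to conclude $\|E^k\|_\infty\le c_r c_\gamma\frac{(1+2\epsilon)^k}{1-\epsilon}T^{\overline\alpha}$, using part iv) of Lemma~\ref{L-coe} ($\sum_{l=0}^{k-1}b_l^k+2\epsilon_k\le 1+2\epsilon$) at the inductive step to absorb the coefficients, and part iii) to handle the sign of $b_{k-1}^k$. Since $\epsilon\le\mathcal{O}(\Delta t^{2-\overline\alpha})$, we have $(1+2\epsilon)^n\le e^{2n\epsilon}\le e^{2T}$ bounded, hence $\|E^k\|_\infty\le C c_r=\mathcal{O}(\epsilon+\Delta t^{2-\overline\alpha}+\Delta x^2)$, which is the claim. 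The only mildly delicate point is bookkeeping the $\epsilon_k$ terms so that the per-step multiplier really is bounded by $1+2\epsilon$ rather than something that compounds to a non-uniform constant; this is already handled cleanly by Lemma~\ref{L-coe}, so in fact the proof is essentially a citation of the stability machinery with zero initial data and $r^k$ in the role of $f^k$. I would present it concisely in that way rather than repeating every line of Theorem~\ref{T-stability}.
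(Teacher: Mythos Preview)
Your proposal is correct and is essentially the same approach as the paper's: the paper's proof is a direct application of Theorem~\ref{T-stability} to the error system (\ref{error1})--(\ref{error3}) with $\|E^0\|_\infty=0$ and $r^k$ playing the role of $f^k$, yielding $\|E^k\|_\infty\le c_r c_\gamma \frac{e^T}{1-\epsilon}T^{\overline{\alpha}}$ and then invoking (\ref{L-r}) to bound $c_r$. Your detailed walkthrough of the induction mechanics is unnecessary (as you yourself note at the end), since the error equation has exactly the form covered by Theorem~\ref{T-stability}; the concise presentation you advocate is precisely what the paper does.
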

\begin{proof}
Applying Theorem \ref{T-stability} for the error equation (\ref{error1})--(\ref{error3}), and noting $\|E^0\|_\infty=0$, we have
\begin{align*}
\|E^k\|_\infty\leq c_rc_\gamma\frac{e^T}{1-\epsilon}T^{\overline{\alpha}}.
\end{align*}
By (\ref{L-r}), there exists a positive constant $c$ such that
\begin{align*}
c_r&\leq c(\epsilon+\Delta t^{2-\overline{\alpha}}+\Delta x^2).
\end{align*}
Consequently, we obtain
\begin{align}\label{order}
\|E^k\|_\infty\leq cc_\gamma\frac{e^T}{1-\epsilon}T^{\overline{\alpha}}(\epsilon+\Delta t^{2-\overline{\alpha}}+\Delta x^2).
\end{align}
The proof is completed.
\end{proof}
\begin{remark}\label{R-epsilon}
From Theorem \ref{T-stability} and Theorem \ref{T-convergence}, the fast ESA scheme (\ref{fast-scheme1})--(\ref{fast-scheme3}) is stable and convergent with the order of $\mathcal{O}\left(\Delta t^{2-\overline{\alpha}}+\Delta x^2\right)$ with $\epsilon\leq\mO(\Delta t ^{2-\overline{\alpha}})$. In order to verify the correctness of the theoretical analysis, we choose $\epsilon=\Delta t^2$ in the actually computing.
\end{remark}

\section{Numerical results} \label{numerical-results}
In this section, two numerical examples are presented to verify the effectiveness of the fast ESA scheme (\ref{fast-scheme1})--(\ref{fast-scheme3}) compared with the $L1$ scheme (\ref{L1-scheme1})--(\ref{L1-scheme3}). All experiments are performed based on Matlab 2016b on a laptop with the configuration: Intel(R) Core(TM) i7-7500U CPU 2.70GHz and 8.00 GB RAM.

Denote
\begin{eqnarray*}
Err(\Delta x,\Delta t)=\|E^n\|_\infty,\ \ Order_t=\log_2\frac{Err(\Delta x,\Delta t)}{Err(\Delta x,\Delta t/2)}.
\end{eqnarray*}

\begin{example}\label{example1}
To verify the efficiency of our fast algorithm for the VO Caputo fractional derivative, we first solve an ordinary differential equation
\begin{align*}
\prescript{C}{0}{\mathcal{D}}^{\alpha(t)}_tu(x,t)=\frac{2}{\Gamma(3-\alpha(t))}t^{2-\alpha(t)}
\end{align*}
with the exact solution $u(t)=t^2$. The time interval is $[0,T]=[0,1]$.

Here two different types of VO functions $\alpha(t)=\frac{2+\sin(5t)}4$ and $\alpha(t)=1-0.8t$ are chosen. In order to match the accuracy as the $L1$ scheme, we set the expected accuracy $\epsilon=\Delta t^2$. Take the verify temporal step size $\Delta t$ from $1/10 000$ to $1/160 000$ and `$N_\epsilon$' is the total number of the exponentials in the ESA technique.
\end{example}


\begin{table}[t]
\setlength{\abovecaptionskip}{10pt}
 \setlength{\belowcaptionskip}{8pt}
 \renewcommand{\arraystretch}{1.25}
\begin{center}
\caption{Convergence rates and the CPU time, memory of the $L1$ scheme and the fast ESA scheme for Example \ref{example1} with $\epsilon=\Delta t^2$.}
\label{T1}
\def\temptablewidth{1.0\textwidth}
{\rule{\temptablewidth}{0.6pt}}
{\footnotesize
\begin{tabular*}{\temptablewidth}{@{\extracolsep{\fill}}cccccccccccccc}
 & &\multicolumn{4}{c}{{$L1$\ \ scheme}} & \multicolumn{4}{c}{{Fast ESA scheme}} \\
 \cline{2-5}\cline{6-11}
$\alpha(t)$ &$n$  &$Err(\Delta x,\Delta t)$      &$Order_t$  &CPU(s)     &Memory   &$Err(\Delta x,\Delta t)$        &$Order_t$ &CPU(s)   &Memory  &$N_{\epsilon}$
  \\
\hline
$\frac{2+\sin(5t)}4$  &10000  &7.4332e-7    & -       &4.45     &2.40e+5   &7.5299e-7   &-     &0.62  &1.28e+4 &264\\
                      &20000  &2.9880e-7    &1.31     &16.71    &4.80e+5   &3.0236e-7   &1.32  &1.02  &1.46e+4 &302\\
                      &40000  &1.2059e-7    &1.31     &64.54    &9.60e+5   &1.2084e-7   &1.32  &2.33  &1.67e+4 &345\\
                      &80000  &4.8831e-8    &1.30     &261.88   &1.92e+6   &4.8586e-8   &1.31  &4.56  &1.88e+4 &389\\
                      &160000 &1.9830e-8    &1.30     &1172.84  &3.84e+6   &2.0591e-8   &1.24  &9.02  &2.10e+4 &436\\
\hline
$1-0.8t$              &10000  &2.5409e-6    & -       &4.19     &2.40e+5   &2.5500e-6   &-     &0.60  &1.61e+4 &333\\
                      &20000  &1.1831e-6    &1.10     &16.04    &4.80e+5   &1.1893e-6   &1.10  &1.16  &1.84e+4 &382\\
                      &40000  &5.5390e-7    &1.09     &62.93    &9.60e+5   &5.6279e-7   &1.08  &2.39  &2.09e+4 &434\\
                      &80000  &2.6050e-7    &1.09     &255.47   &1.92e+6   &2.6181e-7   &1.10  &4.82  &2.36e+4 &490\\
                      &160000 &1.2299e-7    &1.08     &1143.08  &3.84e+6   &1.2434e-7   &1.07  &10.43 &2.65e+4 &549\\
\end{tabular*} {\rule{\temptablewidth}{1pt}}
}
\end{center}
\end{table}
\begin{figure}
\centering
 {\includegraphics[width=0.45\textwidth]{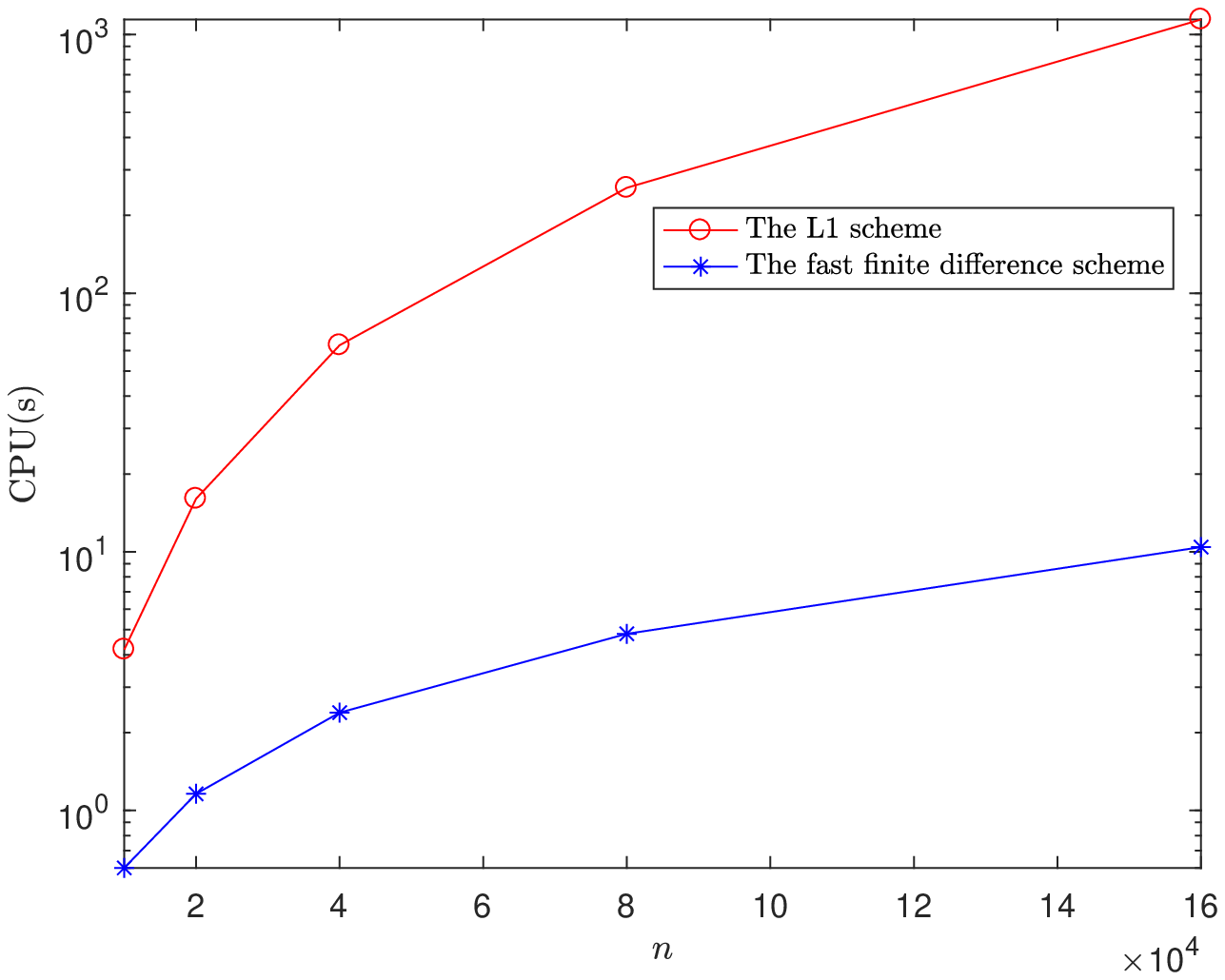}}  {\includegraphics[width=0.45\textwidth]{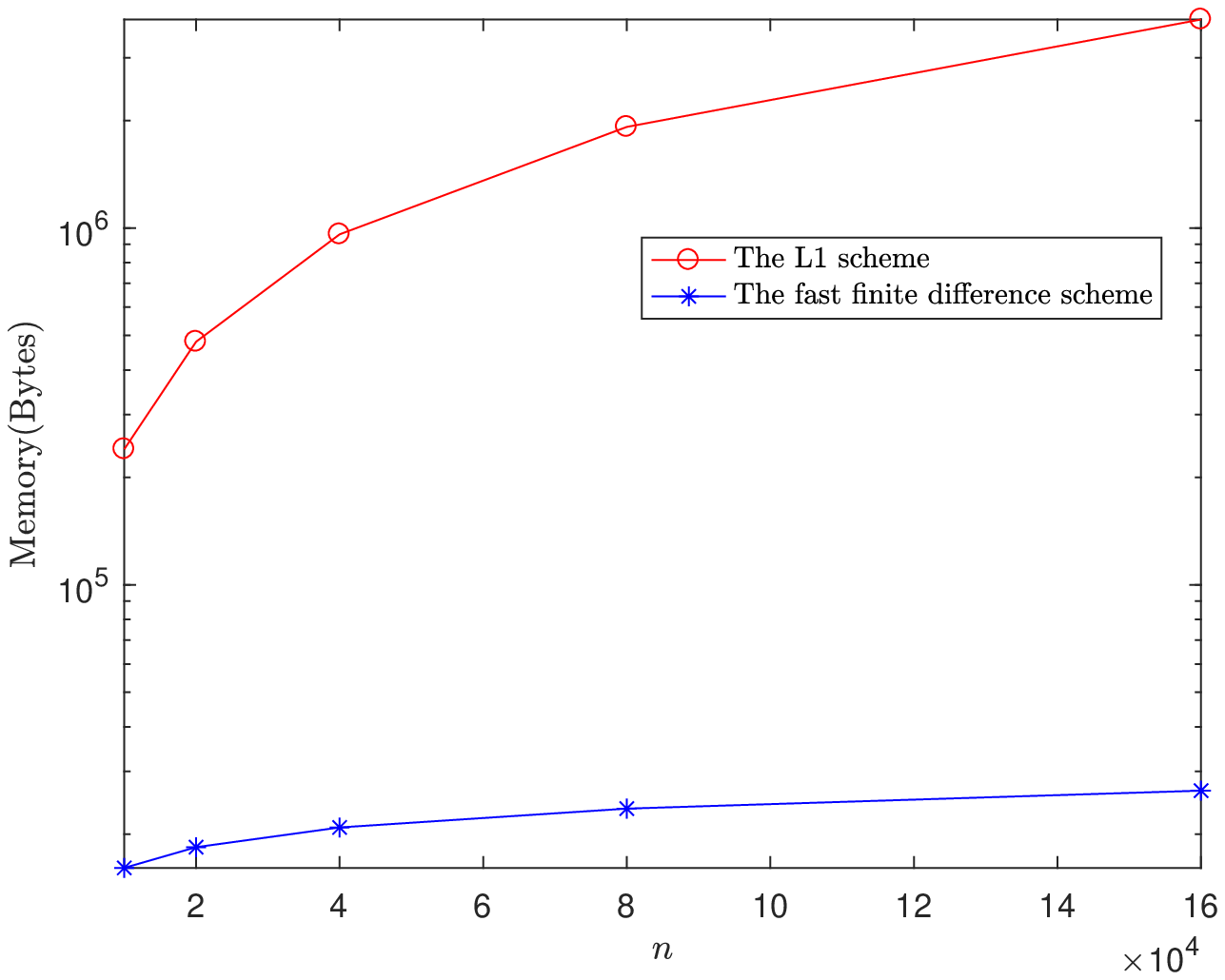}}\\
  \caption{The comparison of the $L1$ scheme and the fast ESA scheme $(\epsilon=\Delta t^2)$ on CPU(s) and Memory (Bytes) for Example \ref{example1} with $\alpha(t)=1-0.8t$.}\label{Fig1}
\end{figure}
The numerical results of the $L1$ scheme and the fast ESA scheme are listed in Table \ref{T1}, respectively. The fast ESA scheme achieves the same accuracy as the $L1$ scheme and reaches $2-\overline{\alpha}$ convergence order in time with the expected accuracy $\epsilon=\Delta t^2$, which verifies the correctness of the results in Section \ref{finite-difference-scheme}. Significantly, compared with the $L1$ scheme, the fast ESA scheme needs much less CPU time and memory with the same $n$ and $m$. Moreover, the number of exponentials in the fast algorithm needed is modest even for high accuracy approximations, which indeed contributes to reduce the storage and computational cost. In addition, Figure \ref{Fig1} shows the developments of CPU time and memory of the two schemes with respect to $n$. The CPU time and the memory of the $L1$ scheme increase much faster than those of the fast ESA scheme, which is consistent with the study in Section \ref{fast-approximation}.
\begin{example}\label{example2}
In this example, set $[0,x_R]=[0,1]$ and $[0,T]=[0,1]$. Consider the initial-boundary value problem of VO time-fractional diffusion equations (\ref{VOTFDE1})--(\ref{VOTFDE3}) with the source term
\begin{align*}
f(x,t)=20x^2(1-x)\left(\frac{t^{2-\alpha(t)}}{\Gamma(3-\alpha(t))}+\frac{t^{1-\alpha(t)}}{\Gamma(2-\alpha(t))}\right)-20(t+1)^2(1-3x),
\end{align*}
and the initial value
\begin{align*}
\varphi(x)=10x^2(1-x).
\end{align*}
The exact solution is given by (see \cite{Shen-2012})
\begin{align*}
u(x,t)=10x^2(1-x)(t+1)^2.
\end{align*}
\end{example}
\begin{table}[t]
\begin{center}
\caption{Convergence rates and the CPU time, memory of the $L1$ scheme and the fast ESA scheme for Example \ref{example2} with $m=1000$, $\epsilon=\Delta t^2$.}
\label{T2}
\def\temptablewidth{1.0\textwidth}
{\rule{\temptablewidth}{1.0pt}}
{\footnotesize
\begin{tabular*}{\temptablewidth}{@{\extracolsep{\fill}}cccccccccccccc}
& &\multicolumn{4}{c}{{$L1$\ \ scheme}} & \multicolumn{4}{c}{{Fast ESA scheme}} \\
 \cline{2-5}\cline{6-11}
$\alpha(t)$          &$n$  &$Err(\Delta x,\Delta t)$      &$Order_t$  &CPU(s)     &Memory   &$Err(\Delta x,\Delta t)$        &$Order_t$ &CPU(s)   &Memory  &$N_{\epsilon}$\\ \hline
$\frac{2+\sin(5t)}4$ &10000  &1.3566e-8   & -      &328.40     &8.02e+7  &1.6569e-8    &-     &39.53   &4.34e+6 &264 \\
                     &20000  &5.0973e-8   &1.41    &1356.85     &1.60e+8  &5.9808e-8    &1.47  &87.02   &4.95e+6 &302 \\
                     &40000  &1.9542e-9   &1.38    &5527.00     &3.20e+8  &2.1382e-9    &1.48  &159.06  &5.64e+6 &345 \\
                     &80000  &7.4773e-10   &1.39    &21462.24    &6.41e+8  &7.9534e-10    &1.43  &444.86  &6.35e+6 &389 \\
                     &160000 &3.0255e-10   &1.31    &87117.08    &1.28e+9  &3.1550e-10    &1.33  &990.03 &7.10e+6 &436 \\
\hline
$1-0.8t$             &10000  &3.1459e-8   & -      &341.48     &8.02e+7  &3.4922e-8    &-     &49.17   &5.45e+6 &333 \\
                     &20000  &1.4256e-8   &1.14    &1376.68     &1.60e+8  &1.5391e-8    &1.18  &105.49   &6.23e+6 &382 \\
                     &40000  &6.5467e-9   &1.12   &5342.77     &3.20e+8  &7.4250e-9    &1.05  &248.66  &7.07e+6 &434 \\
                     &80000  &3.0392e-9    &1.11    &21465.18    &6.41e+8  &3.2354e-9    &1.20  &553.78  &7.96e+6 &490 \\
                     &160000 &1.4491e-9   &1.07    &90444.47    &1.28e+9  &1.5451e-9    &1.07  &1226.20 &8.91e+6 &549 \\
\end{tabular*} {\rule{\temptablewidth}{1pt}}
}
\end{center}
\end{table}

With two different types of VO functions $\alpha(t)=\frac {2+\sin(5t)}4$ and $\alpha(t)=1-0.8t$, Table \ref{T2} lists the computational results of the $L1$ scheme and the fast ESA scheme with the expected accuracy $\epsilon=\Delta t^2$. Meanwhile, since only the performance in time is investigated, we take the fixed and sufficiently small spatial step size $\Delta x=1/1000$. The temporal step size $\Delta t$ verifies from $1/10000$ to $1/160000$. From Table \ref{T2}, we note that the two schemes have the same accuracy and convergence rate. However, the CPU time and the memory in workspace are both pretty less than the $L1$ scheme. In fact, the fast ESA scheme takes about 990s when $n=160000$, while the $L1$ scheme takes more than $8e+04$s. Moreover, the storage used in the fast ESA scheme is almost 1\textperthousand\ of the one in the $L1$ scheme when $n$ is large. The numerical results verify the effectiveness of the fast ESA scheme.

\section{Concluding Remarks} \label{concluding-remarks}
In this paper, an efficient fast algorithm for the VO time-fractional diffusion equations is presented applying the ESA technique with specified quadrature exponents. The parameters are properly selected to achieve the efficient accuracy. The computational cost of the proposed algorithm is of $\mathcal{O}(n\log^2 n)$ with $\mO(\log^2n)$ storage. Moreover, the resulting scheme is verified to be unconditionally stable and convergent with the order of $\mathcal{O}(\Delta t^{2-\overline{\alpha}}+\Delta x^2)$ via the maximum principle. Numerical tests show that the fast ESA scheme achieves the same accuracy and convergence order with much less storage and computational cost comparing with the $L1$ scheme.

In future work, the strategies developed in this paper could be exploited to construct fast algorithms for VO functions with respect to space variable as $\alpha(x;t)$ or in other definitions \cite{Sun-2019}. Furthermore, due to the nonuniform time step is a vital tool to approximate the fractional derivative, the fast method on the nonuniform time step is under our consideration.

\end{document}